\newcommand{\bm}{\mathbf{m}}
\newcommand{\bu}{\mathbf{u}}
\newcommand{\G}{\mathcal{G}}
\newcommand{\V}{\mathcal{V}}
\newcommand{\E}{\mathcal{E}}
\title{Measure-theoretic bounds on the spectral radius of graphs from walks}
\author[1]{Francisco Barreras \footnote{Corresponding author. Address: University of Pennsylvania, Philadelphia, PA 19104, USA}}
\author[2]{Mikhail Hayhoe}
\author[2]{Hamed Hassani}
\author[2]{Victor M. Preciado}
\affil[1]{Department of Mathematics, University of Pennsylvania, Philadelphia, PA 19104, USA}
\affil[2]{Department of Electrical \& Systems Engineering, University of Pennsylvania, Philadelphia, PA 19104, USA}
\date{October 2019}
\begin{document}
\theoremstyle{definition}
\makeatletter
\xpatchcmd{\@thm}{.}{}{}{}
\makeatother

\makeatletter
\newcommand\blfootnote[1]{%
  \begingroup
  \renewcommand\thefootnote{}\footnote{#1}%
  \addtocounter{footnote}{-1}%
  \endgroup}
\makeatother

\newtheorem{theorem}{Theorem}
\newtheorem{definition}[theorem]{Definition}
\newtheorem{lemma}[theorem]{Lemma}
\newtheorem{corollary}[theorem]{Corollary}
\maketitle
\blfootnote{\textit{Email addresses}: fbarrer@sas.upenn.edu, mhayhoe@seas.upenn.edu, hassani@seas.upenn.edu, \mbox{preciado@seas.upenn.edu}} 
\begin{abstract}
Let $\G$ be an undirected graph with adjacency matrix $A$
and spectral radius $\rho$. Let $w_k, \phi_k$ and
$\phi_k^{(i)}$ be, respectively, the number walks of
length $k$, closed walks of length $k$ and closed walks
starting and ending at vertex $i$ after $k$ steps. In this
paper, we propose a measure-theoretic framework which
allows us to relate walks in a graph with its spectral 
properties. In particular, we show that $w_k, 
\phi_k$ and $\phi_k^{(i)}$ can be interpreted as the 
moments of three different measures, all of them supported
on the spectrum of $A$.  Building on this interpretation, 
we leverage results from the classical moment problem to 
formulate a hierarchy of new lower and upper bounds on 
$\rho$, as well as provide alternative proofs to several 
well-known bounds in the literature. \\

\textbf{Keywords:} \textit{walks on graphs, spectral radius, moment problem}
\end{abstract}
\section{Introduction}

Given an undirected graph $\G = (\V, \E)$ with vertex set 
$\V = \{1, \dots, n \}$ and edge set $\mathcal{E}\subseteq
\V \times \V$, we define a \textit{walk of length $k$}, or
\textit{$k$-walk}, as a sequence of vertices $(i_0, i_1,
\dots, i_{k})$ such that $(i_s, i_{s+1}) \in \E$ for $s \in \{0, \dots , k-1\}$. A walk of length $k$ is called \textit{closed} if $i_0 = i_{k}$; furthermore, we will refer to them as \textit{closed walk from vertex} $i_0$ when we need to distinguish them from the set of all closed $k$-walks. We denote the number of walks, closed walks, and closed walks from vertex $i$ of length $k$ by $w_k$, $\phi_k$, and $\phi_k(i)$, respectively. Denote by $A$ the adjacency matrix of  graph $\G$ and its eigenvalues by $\lambda_1 \ge \lambda_2 \ge \cdots \ge \lambda_n$ . The set containing these eigenvalues will be referred to as the \textit{spectrum} of $\G$. From Perron-Frobenius' Theorem~\cite{horn2012matrix}, we have that the spectral radius of $A$, defined by $\rho:= \max\limits_{\scriptscriptstyle 1\le i\le n} \vert \lambda_i\vert$, is equal to $\lambda_1$. \\

In the literature, we find several lower bounds on $\rho$ formulated in terms of walks in the graph. Bounds in terms of closed walks are rarer (see, e.g, \cite{preciado2013moment}). Many of these bounds come from dexterous applications of the Rayleigh principle \cite{horn2012matrix} or the Cauchy-Schwarz inequality. For example, making use of these tools,  Collatz and Sinogowitz~\cite{collatz1957spektren}, Hoffmeister~\cite{hofmeister1988spectral}, Yu et. al.~\cite{yu2004spectral}, and  Hong and Zhang~\cite{hong2005sharp} derived, respectively, the following lower bounds:
\begin{equation}
        \rho \ge \dfrac{w_1}{w_0},
\qquad \rho \ge \sqrt{\dfrac{w_2}{w_0}},
\qquad \rho \ge \sqrt{\dfrac{w_4}{w_2}},
\qquad \rho \ge \sqrt{\dfrac{w_6}{w_4}}.  \label{eq:first-bounds}
\end{equation}

 Nikiforov\footnote{Nikiforov's notation in \cite{nikiforov2006walks} indexes $w_k$ in terms of the number of nodes visited by the walks instead of the number of steps, as used in our manuscript.} \cite{nikiforov2006walks} generalized these results by expressing the number of walks $w_k$ in terms of the eigenvalues, to obtain bounds of the form
\begin{align}
    \rho^r \ge \dfrac{w_{2s+r}}{w_{2s}}, \label{niki-bound}
\end{align}
for $s,r \in \mathbb{N}_0$. Cioab\u{a} and Gregory  \cite{cioabua2007large} provide the following improvement to the first bound in \eqref{eq:first-bounds}:
\begin{align}
    \rho \ge \dfrac{w_1}{w_0} + \dfrac{1}{w_0(\Delta + 2)},
\end{align}

\noindent where $\Delta$ is the maximum of the vertex degrees in $\G$. Nikiforov \cite{nikiforov2007bounds} showed that
\begin{align}
    \rho > \frac{w_1}{w_0} + \frac{1}{2w_0 + w_1}.
\end{align}

\noindent Favaron et. al \cite{favaron1993some} used the fact that there is a $K_{1,{\scriptscriptstyle \Delta}}$ subgraph in $\G$ to obtain:
\begin{align}
    \rho \ge \sqrt{\Delta}. \label{favaron}
\end{align}

There is a number of upper bounds on $\rho$ in terms of graph invariants like the domination number \cite{stevanovic2008spectral}, chromatic number \cite{cvetkovic1972chromatic, edwards1983lower}, and clique number \cite{edwards1983lower}. Nikiforov \cite{nikiforov2006walks} provides a whole hierarchy of bounds in terms of the clique number $\omega(\G)$, which for $k \in \mathbb{N}_0$ are given by 
\begin{align}
    \rho^{k+1} \le \left(1 - \dfrac{1}{\omega(\G)}\right) w_{k}. \label{nikiforov-bound}
\end{align}
\noindent We also find in the literature several bounds in terms of the \textit{fundamental weight} of $\G$, defined as $\sum_{j=1}^n u_{1j}$, where $u_{1j}$ is the $j$-th entry of the \textit{leading eigenvector}\footnote{The leading eigenvector of $A$ is the eigenvector associated with the largest eigenvalue $\lambda_1$. We assume eigenvectors to be normalized to be of unit Euclidean norm.} of $A$ denoted by $\mathbf{u}_1$. For example, Wilf \cite{wilf1986spectral} proved the following upper bound:
\begin{align}
\rho \leq \frac{\omega(\mathcal{G})-1}{\omega(\mathcal{G})}\left(\sum_{j=1}^{n} u_{1j}\right)^{2}. \label{wilf}   
\end{align}
 Cioaba and Gregory \cite{cioaba2007principal} showed that, for $k \in \mathbb{N}_0$,
\begin{align}
    \rho^k \le \sqrt{w_{2k}}\max_{1\le j \le n} u_{1j}. \label{ineq:vanmieghem} 
\end{align}
Moreover, Van Mieghem \cite{van2014graph} proved the bound
\begin{align}
    \rho^k \le \dfrac{w_k}{\sum_{i=1}^n u_{1i}}\max_{1\le j \le n}u_{1j}. 
\end{align}

In this paper we provide upper and lower bounds on $\rho$ by interpreting the sequences $\{w_k\}_{k=0}^\infty, \{\phi_k\}_{k=0}^\infty$ and $\{\phi_k^{(i)}\}_{k=0}^\infty$ as moments of three measures supported on the spectrum of $\G$. Building on this interpretation, we will use classical results from probability theory relating the moments of a measure with its support. Following this approach, we will derive a hierarchy of new bounds on the spectral radius, as well as provide alternative proofs to several existing bounds in the literature. The rest of the paper is organized as follows. Section~\ref{sec:background} outlines the tools we will use to analyze walks on graphs using measures and moment sequences. Section~\ref{sec:lower} presents multiple lower bounds on the spectral radius derived from the moment problem, while Section~\ref{sec:upper} introduces several upper bounds.
\section{Background and Preliminaries} \label{sec:background}
Throughout this paper, we use standard graph theory notation, as in \cite{west1996introduction}. We will use upper-case letters for matrices, calligraphic upper-case letters for sets, and bold lower-case letters for vectors. For a vector $\mathbf{v}$ or a matrix $M$, we denote by $\mathbf{v}^\intercal$ and $M^\intercal$ their respective transposes. The $(i,j)$-th entry of a matrix $M$ is denoted by $M_{ij}$. For a $n\times n$ matrix $M$ and a set $\mathcal{J} \subseteq \{1, \dots, n\}$, the matrix $M_{\mathcal{J}}$ is defined to be the submatrix of $M$ where columns and rows with indices not in $\mathcal{J}$ have been removed; $M_{\mathcal{J}}$ is also called a \textit{principal submatrix} of $M$ and, if $\mathcal{J} = \{1, \dots, k\}$, $M_{\mathcal{J}}$ is called a \textit{leading principal submatrix}. Finally, we say that a symmetric matrix $M \in \mathbb{R}^{n \times n}$ is positive semidefinite (resp. positive definite) if for every non-zero vector $\mathbf{v} \in \mathbb{R}^n$ we have $\mathbf{v}^\intercal M \mathbf{v} \geq  0$ (resp. $\mathbf{v}^\intercal M \mathbf{v} > 0 )$ and we denote this as $M \succeq 0$ (resp. $M \succ 0$).

\subsection{Spectral measures and walks}
We can relate walks and closed walks on a graph $\G$ to its spectrum using measures, as we describe in detail below. We begin by stating the following well-known result from algebraic graph theory \cite{biggs1993algebraic}:
\begin{lemma} \label{lema:powers}
    For any integer $k$, the $(i,j)$-th entry of the matrix $A^k$ is equal to the number of $k$-walks from vertex $i$ to vertex $j$ on $\G$.
\end{lemma}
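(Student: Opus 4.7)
The plan is to prove this by induction on $k$, exploiting the correspondence between matrix multiplication and the sequential composition of walks. For the base case, $k=0$ gives $A^0 = I$, whose $(i,j)$-th entry is $\delta_{ij}$, matching the fact that the only $0$-walk is the trivial one-vertex sequence from $i$ to itself. For $k=1$, the claim follows directly from the definition of the adjacency matrix: $A_{ij}$ is $1$ if $(i,j) \in \mathcal{E}$ and $0$ otherwise, which equals the number of $1$-walks from $i$ to $j$ since at most one such walk exists in a simple graph.

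For the inductive step, assume the result holds for $k-1$. Writing $A^k = A^{k-1} A$ and expanding the product entrywise yields
\begin{equation*}
(A^k)_{ij} = \sum_{\ell=1}^n (A^{k-1})_{i\ell}\, A_{\ell j}.
\end{equation*}
By the inductive hypothesis, $(A^{k-1})_{i\ell}$ counts the $(k-1)$-walks from $i$ to $\ell$, while $A_{\ell j}$ is the indicator that $(\ell,j) \in \mathcal{E}$. Each term in the sum therefore counts the $(k-1)$-walks from $i$ to $\ell$ that can be extended to $j$ by appending the edge $(\ell,j)$. To conclude I would exhibit the bijection between $k$-walks from $i$ to $j$ and pairs $(W, \ell)$ where $W$ is a $(k-1)$-walk from $i$ to $\ell$ and $\ell$ is a neighbor of $j$: the forward map sends a walk $(i=i_0, \ldots, i_{k-1}, i_k=j)$ to its prefix together with $\ell = i_{k-1}$, and the inverse appends $j$ to the given $(k-1)$-walk. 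Summing over $\ell$ thus recovers the total count of $k$-walks from $i$ to $j$, closing the induction.

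There is no real obstacle here; the only point requiring care is verifying that the decomposition "$(k-1)$-walk plus one final edge" is genuinely a bijection, which is immediate from the fact that a walk is defined as an ordered sequence of vertices and hence determines its penultimate vertex uniquely.
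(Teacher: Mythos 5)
Your proof is correct: the induction on $k$ via $A^k = A^{k-1}A$, together with the bijection between $k$-walks from $i$ to $j$ and pairs consisting of a $(k-1)$-walk from $i$ to a neighbor $\ell$ of $j$, is the standard and complete argument. The paper itself gives no proof of this lemma---it simply cites it as a well-known fact from algebraic graph theory (Biggs)---so your write-up supplies exactly the classical argument the citation points to; the only minor caveat is that the statement says ``any integer $k$'' while your argument (correctly) treats $k \ge 0$, which is the only regime the paper ever uses.
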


\noindent Since $\G$ is undirected, $A$ is symmetric and admits an
orthonormal diagonalization. In particular, let $\{\bu_1, \bu_2,
\ldots, \bu_n\}$ be a complete set of orthonormal eigenvectors of $A$. Hence, we have that $    A^k = U
\operatorname{diag}\left(\lambda_{1}^k, \ldots, \lambda_{n}^k\right)
U^{\intercal},$
\noindent for every $k \ge 0$, where $U := [\bu_1 | \bu_2 | \ldots |
\bu_n]$. We denote the $i$-th entry of the $l$-th eigenvector by $u_{il}$. From this factorization, we can obtain identities which will
be used in the following sections.
\begin{lemma} \label{lema:moms}
Define $c_l^{(i)} := u_{il}^2$ and $c_l := \left(\sum_{i=1}^n u_{il}\right)^2$. Then, for every $k \ge 0$, we have
\begin{align*}
    \phi_k   = \sum_{l=1}^n \lambda_l^k, \qquad
    \phi_k(i) = \sum_{l=1}^n  c_l^{(i)} \lambda_l^k, \qquad 
    w_k  = \sum_{l=1}^n c_l  \lambda_l^k. 
\end{align*}
\end{lemma}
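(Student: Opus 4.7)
The plan is to express each of the three walk counts as a quadratic form in the adjacency matrix, and then plug in the spectral decomposition that was already written down just before the lemma statement.

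First I would use Lemma \ref{lema:powers} to rewrite the three quantities purely in terms of $A^k$. A closed $k$-walk from vertex $i$ is a $k$-walk from $i$ to $i$, so $\phi_k(i) = (A^k)_{ii}$. Summing over starting vertices gives $\phi_k = \sum_{i=1}^n (A^k)_{ii} = \operatorname{tr}(A^k)$. For the total number of walks I would sum $(A^k)_{ij}$ over all ordered pairs $(i,j)$, so that $w_k = \mathbf{1}^\intercal A^k \mathbf{1}$, where $\mathbf{1}$ is the all-ones vector.

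Next I would apply the orthonormal diagonalization $A^k = \sum_{l=1}^n \lambda_l^k \, \mathbf{u}_l \mathbf{u}_l^\intercal$ already displayed in the excerpt. Substituting into the three expressions above gives
\begin{align*}
\phi_k(i) &= \sum_{l=1}^n \lambda_l^k (\mathbf{u}_l \mathbf{u}_l^\intercal)_{ii} = \sum_{l=1}^n \lambda_l^k u_{il}^2 = \sum_{l=1}^n c_l^{(i)} \lambda_l^k, \\
\phi_k &= \sum_{l=1}^n \lambda_l^k \operatorname{tr}(\mathbf{u}_l \mathbf{u}_l^\intercal) = \sum_{l=1}^n \lambda_l^k \|\mathbf{u}_l\|^2 = \sum_{l=1}^n \lambda_l^k, \\
w_k &= \sum_{l=1}^n \lambda_l^k \, \mathbf{1}^\intercal \mathbf{u}_l \mathbf{u}_l^\intercal \mathbf{1} = \sum_{l=1}^n \lambda_l^k \Bigl(\sum_{i=1}^n u_{il}\Bigr)^{\!2} = \sum_{l=1}^n c_l \lambda_l^k,
\end{align*}
where the second line uses $\|\mathbf{u}_l\| = 1$ because the $\mathbf{u}_l$ form an orthonormal basis.

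There is no real obstacle here; the identities are immediate once walks are translated into traces and quadratic forms in $A^k$ and the rank-one expansion of $A^k$ is applied. The only thing to be careful about is recording the normalization of the eigenvectors (needed for the middle identity) and making sure the definitions of $c_l$ and $c_l^{(i)}$ are matched to the correct entry of $\mathbf{u}_l \mathbf{u}_l^\intercal$.
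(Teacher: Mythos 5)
Your proof is correct and follows essentially the same route as the paper: translate the walk counts into entries, trace, and quadratic form of $A^k$ via Lemma \ref{lema:powers}, then expand $A^k$ through its orthonormal diagonalization (your rank-one form $\sum_l \lambda_l^k \mathbf{u}_l\mathbf{u}_l^\intercal$ is just the entrywise identity $(A^k)_{ij}=\sum_l u_{il}\lambda_l^k u_{jl}$ used in the paper, and your use of $\|\mathbf{u}_l\|=1$ matches the paper's appeal to $\sum_i c_l^{(i)}=1$). No issues.
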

\begin{proof}
 Using Lemma~\ref{lema:powers} we have that $\phi_k   = \sum_{i=1}^n (A^k)_{ii}$, $\phi_k(i) = (A^k)_{ii}$, and  $w_k = \sum_{i, j} (A^k)_{ij}$. Furthermore, we have that $\left(A^{k}\right)_{i j} =\sum_{l=1}^{n} u_{i l} \lambda_{l}^{k} u_{j l},$ directly from the diagonalization of $A$. Combining these results, and the fact that $\sum_{i=1}^n c_l^{(i)} = 1$, the result follows.
\end{proof}
Next, we introduce three atomic measures supported on the spectrum of $A$.

\begin{definition}[Spectral measures] \label{def:spectral-measures}
 Let $\delta(\cdot)$ be the Dirac delta measure. For a simple graph $\G$ with eigenvalues $\lambda_1 \ge \lambda_2, \dots \ge \lambda_n$, define the \textit{closed-walks measure} as
 \begin{align*}
     \mu_{\G}(x) \coloneqq \sum_{l=1}^{n} \delta\left(x-\lambda_{l}\right).
 \end{align*}
We also define the \textit{closed-walks measure for vertex $i$} as
\begin{align*}
    \mu_{\G}^{(i)}(x) \coloneqq \sum_{l=1}^{n} c_l^{(i)} \delta\left(x-\lambda_{l}\right),
\end{align*}
\noindent and the \textit{walks measure} as
 \begin{align*}
     \nu_{\G}(x) \coloneqq \sum_{l=1}^{n} c_l \delta\left(x-\lambda_{l}\right).
 \end{align*}
\end{definition}

\begin{lemma} \label{lemma:moms-walks} 
For a real measure $\zeta (x)$, define its \textit{$k$-th moment} as $m_k\left(\zeta\right) = \int_{\mathbb{R}} x^k \mathrm{d} \zeta(x)$. Then, the measures in Definition \eqref{def:spectral-measures} satisfy
\begin{align*} 
    m_k\left(\mu_{\G}\right) = \phi_k, \qquad
    m_k (\mu_{\G}^{(i)}) = \phi_k^{(i)}, \qquad
    m_k\left(\nu_{\G}\right) = w_k.  
\end{align*}
\end{lemma}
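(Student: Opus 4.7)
The proof is essentially a direct computation, so my plan is to unpack each moment definition, apply the sifting property of the Dirac delta, and match the resulting sums with the identities already established in Lemma~\ref{lema:moms}.

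First, I would handle $\mu_\G$. By definition of the $k$-th moment and linearity of the integral,
\[
m_k(\mu_\G) \;=\; \int_{\mathbb{R}} x^k \, \mathrm{d}\mu_\G(x) \;=\; \sum_{l=1}^n \int_{\mathbb{R}} x^k \, \delta(x-\lambda_l) \, \mathrm{d}x \;=\; \sum_{l=1}^n \lambda_l^k,
\]
and then Lemma~\ref{lema:moms} identifies this sum with $\phi_k$. The same argument, but keeping the weights $c_l^{(i)}$ and $c_l$ inside the sum, handles $\mu_\G^{(i)}$ and $\nu_\G$ respectively, yielding $\sum_l c_l^{(i)} \lambda_l^k = \phi_k^{(i)}$ and $\sum_l c_l \lambda_l^k = w_k$ by the remaining two identities in Lemma~\ref{lema:moms}.

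Since all three measures are finite atomic measures supported on the (finite) spectrum of $A$, the interchange of sum and integral is trivially justified and there are no convergence issues to worry about. There is really no obstacle here beyond invoking the defining property of the Dirac delta, namely $\int_{\mathbb{R}} f(x)\,\delta(x-a)\,\mathrm{d}x = f(a)$ applied to $f(x) = x^k$; the whole proof is a one-line computation for each of the three measures, with Lemma~\ref{lema:moms} doing all the combinatorial work of relating eigenvalue power sums to walks. The statement is best viewed as a conceptual repackaging: the walk counts $\phi_k$, $\phi_k^{(i)}$, $w_k$ are precisely the moment sequences of the three spectral measures, which is what enables the application of moment-problem techniques later in the paper.
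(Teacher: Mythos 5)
Your proposal is correct and matches the paper's own argument: both evaluate $\int_{\mathbb{R}} x^k\,\mathrm{d}\zeta(x)$ by the sifting property of the Dirac delta to obtain the weighted eigenvalue power sums, and then identify these with $\phi_k$, $\phi_k^{(i)}$, $w_k$ via Lemma~\ref{lema:moms}. The paper writes out only the $\mu_{\G}$ case and declares the other two analogous, exactly as you do.
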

\begin{proof}
 For the case of $\mu_{\G}$, we evaluate $k$-th moment, as follows:
 \begin{align*}
     m_k\left(\mu_{\G}\right) = \int_{\mathbb{R}} x^k \mathrm{d} \mu_{\G}(x) = \int_{\mathbb{R}}x^k \sum_{l=1}^n \delta(x - \lambda_l) \mathrm{d}x =\sum_{l=1}^n \lambda_l^k = \phi_k.
 \end{align*}
 The other two cases have analogous proofs.
\end{proof}

\subsection{The moment problem}

In order to derive bounds on the spectral radius $\rho$, we will make use of results from the \textit{moment problem} \cite{schmudgen2017moment}. This problem is concerned with finding necessary and sufficient conditions for a sequence of real numbers to be the \textit{moment sequence} of a measure supported on a set $\mathcal{K} \subseteq \mathbb{R}$. This is formalized below.

\begin{definition}[$\mathcal{K}$-moment sequence] \label{def:moment-seq}
 The infinite sequence of real numbers $\mathbf{m}=\left(m_{0}, m_{1}, m_{2},\ldots\right)$ is called a \emph{$\mathcal{K}$-moment sequence} if there exists a Borel measure $\zeta$ supported on $\mathcal{K} \subseteq \mathbb{R}$ such that
\begin{align*}
    m_k = \int_{\mathcal{K}} x^k d\zeta(x), \hspace{12pt} \text{ for all } k \in \mathbb{N}_0 .
\end{align*}
\end{definition}

The following result, known as \textit{Hamburger's theorem} \cite{schmudgen2017moment}, will be used in Sections \ref{sec:lower} and \ref{sec:upper}.

\begin{theorem}[Hamburger's Theorem~\cite{schmudgen2017moment}]\label{thm:hamburger} 
Let $\mathbf{m} = \left(m_{0},m_{1},m_{2},\ldots\right)$ be an infinite sequence of real numbers. For $n \in \mathbb{N}_0$, define the \textit{Hankel matrix of moments} as
\begin{align}
    H_{n}(\mathbf{m}) \coloneqq \left[\begin{array}{cccc}
    {m_{0}} & {m_{1}} & {\dots} & {m_{n}} \\
    {m_{1}} & {m_{2}} & {\dots} & {m_{n+1}} \\
    {\vdots} & {\vdots} & {\ddots} & {\vdots} \\
    {m_{n}} & {m_{n+1}} & {\dots} & {m_{2 n}}
    \end{array}\right] \in \mathbb{R}^{(n+1)\times (n+1)}. \label{def:hankel-moms}
\end{align}
The sequence $\mathbf{m}$ is a $\mathbb{R}$-moment sequence, if and only if, for every $n \in \mathbb{N}_0$, $H_n(\mathbf{m}) \succeq 0$.
\end{theorem}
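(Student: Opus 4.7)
The plan is to establish the two directions separately: necessity is elementary, while sufficiency requires a functional-analytic construction.

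For the necessity direction, suppose $\mathbf{m}$ is an $\mathbb{R}$-moment sequence with representing measure $\zeta$. For any vector $\mathbf{c} = (c_0, \dots, c_n)^\intercal \in \mathbb{R}^{n+1}$, I would compute
$$\mathbf{c}^\intercal H_n(\mathbf{m}) \mathbf{c} = \sum_{i,j=0}^n c_i c_j\, m_{i+j} = \int_{\mathbb{R}} \Bigl(\sum_{i=0}^n c_i x^i\Bigr)^2 \mathrm{d}\zeta(x) \ge 0,$$
which yields $H_n(\mathbf{m}) \succeq 0$ for every $n \in \mathbb{N}_0$.

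For sufficiency, suppose $H_n(\mathbf{m}) \succeq 0$ for all $n$. First I would define a linear functional $L \colon \mathbb{R}[x] \to \mathbb{R}$ by linearly extending $L(x^k) := m_k$. The positive semidefiniteness of the Hankel matrices is precisely the statement that $L(p^2) \ge 0$ for every polynomial $p$, since writing $p(x) = \sum_i c_i x^i$ yields $L(p^2) = \mathbf{c}^\intercal H_n(\mathbf{m}) \mathbf{c}$ for $n$ sufficiently large. Next I would build a Hilbert space $\mathcal{H}$ by equipping $\mathbb{R}[x]$ with the bilinear form $\langle p, q\rangle := L(pq)$, quotienting by the null space $\{p : L(p^2) = 0\}$ to obtain an inner product, and completing. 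The multiplication-by-$x$ operator $T \colon p \mapsto xp$ is symmetric with respect to this inner product because $\langle Tp, q\rangle = L(xpq) = \langle p, Tq\rangle$. The spectral theorem applied to a self-adjoint extension $\widetilde{T}$ of $T$ on a possibly larger Hilbert space yields a projection-valued measure $E$ on $\mathbb{R}$, and setting $\mathrm{d}\zeta(x) := \mathrm{d}\langle E(x)\mathbf{1}, \mathbf{1}\rangle$, where $\mathbf{1}$ denotes the equivalence class of the constant polynomial $1$, produces the representing measure via $\int_{\mathbb{R}} x^k \mathrm{d}\zeta(x) = \langle \widetilde{T}^k \mathbf{1}, \mathbf{1}\rangle = L(x^k) = m_k$.

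The main obstacle will be producing the self-adjoint extension $\widetilde{T}$, because $T$ is in general unbounded (reflecting the unboundedness of the support $\mathbb{R}$), so existence is not automatic. It follows from von Neumann's deficiency-index criterion: since $T$ commutes with the natural complex conjugation on polynomials, its deficiency indices are equal and a self-adjoint extension exists. Once this step is in hand, the remainder is a routine application of the spectral theorem. A purely real-analytic alternative would be to invoke Haviland's theorem after first extending $L$ to a positive linear functional on $C_c(\mathbb{R})$ via truncation and polynomial approximation, and then applying the Riesz--Markov representation theorem; the operator-theoretic route is cleaner and more standard.
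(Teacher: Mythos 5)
This statement is quoted in the paper as a classical result and is not proved there; the authors simply cite \cite{schmudgen2017moment} and use it as a black box. Your argument is, in fact, the standard operator-theoretic proof found in that reference, and it is essentially correct: necessity via $\mathbf{c}^\intercal H_n(\mathbf{m})\mathbf{c}=\int p(x)^2\,\mathrm{d}\zeta(x)\ge 0$, and sufficiency via the GNS-type construction, symmetry of multiplication by $x$, a self-adjoint extension obtained from the conjugation/deficiency-index argument, and the spectral theorem applied to the cyclic vector $\mathbf{1}$. Two routine points you should spell out in a full write-up: (i) the multiplication operator is well defined on the quotient, i.e.\ the null space $\{p: L(p^2)=0\}$ is invariant under multiplication by $x$, which follows from the Cauchy--Schwarz inequality for the semi-inner product ($|L(x^2p\cdot p)|\le L((x^2p)^2)^{1/2}L(p^2)^{1/2}=0$); and (ii) deficiency indices and the spectral theorem live on the complexification of your real Hilbert space, and with equal deficiency indices von Neumann's theorem already gives a self-adjoint extension in the \emph{same} space, so no enlargement is needed (though Naimark's dilation would also suffice for the conclusion). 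With those details added, your proof is complete and matches the cited source's approach; the Haviland/Riesz--Markov route you mention is a legitimate alternative, using that nonnegative polynomials on $\mathbb{R}$ are sums of two squares.
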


The characterizations of moment sequences supported on intervals of the form $(-\infty, u]$ and $[-u, u]$ are known as the Stieltjes and Hausdorff moment problems, respectively. A proof for the following theorem, known as \textit{Stieltjes' theorem}, can be found in \cite{schmudgen2017moment} for the case where $u=0$, and it can be easily adapted to any $ u \in \mathbb{R}$ through a simple change of variables.
\begin{theorem}[Stieltjes' theorem] \label{thm:stieltjes}
Let $\mathbf{m} = \left(m_{0},m_{1},m_{2},\ldots\right)$ be an infinite sequence of real numbers. For $n \in \mathbb{N}_0$, define the \textit{shifted Hankel matrix of moments} $S_n(\mathbf{m})$ as
\begin{align}
    S_{n}(\mathbf{m}) \coloneqq \left[\begin{array}{cccc}
    {m_{1}} & {m_{2}} & {\dots} & {m_{n+1}} \\
    {m_{2}} & {m_{3}} & {\dots} & {m_{n+2}} \\
    {\vdots} & {\vdots} & {\ddots} & {\vdots} \\
    {m_{n+1}} & {m_{n+2}} & {\dots} & {m_{2 n + 1}}
    \end{array}\right]\in \mathbb{R}^{(n+1)\times (n+1)}.
\end{align}
The sequence $\mathbf{m}$ is a $(-\infty, u]$-moment sequence, if and only if, for every $n \in \mathbb{N}_0$,
\begin{align}
    H_n(\mathbf{m}) \succeq 0, \quad \text{ and } \quad u H_n(\mathbf{m}) - S_n(\mathbf{m}) \succeq 0. \label{mom-minus}
\end{align}
Similarly, the sequence $\mathbf{m}$ is a $[-u, \infty)$-moment sequence, if and only if, for every $n \in \mathbb{N}_0$,
\begin{align}
    H_n(\mathbf{m}) \succeq 0, \quad \text{ and } \quad  u H_n(\mathbf{m}) + S_n(\mathbf{m}) \succeq 0 \label{mom-plus}.
\end{align}
\end{theorem}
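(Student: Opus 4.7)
The strategy is to reduce the general case $u \in \mathbb{R}$ to the case $u=0$, which is supplied by \cite{schmudgen2017moment}. I focus on the $(-\infty, u]$ statement; the $[-u, \infty)$ case is then completely analogous via the substitution $y = x + u$.

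First, introduce the affine change of variables $y = x - u$, a bijection between $(-\infty, u]$ and $(-\infty, 0]$. Given a candidate sequence $\bm = (m_0, m_1, \ldots)$, define the shifted sequence $\tilde{\bm}$ by $\tilde m_k \coloneqq \sum_{j=0}^k \binom{k}{j}(-u)^{k-j} m_j$. These are precisely the moments one obtains by pushing a measure $\zeta$ on $(-\infty, u]$ forward to a measure $\tilde\zeta$ on $(-\infty, 0]$ via $x \mapsto x-u$, and since this pushforward is invertible, $\bm$ is an $(-\infty,u]$-moment sequence if and only if $\tilde{\bm}$ is an $(-\infty, 0]$-moment sequence. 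By the $u=0$ case from \cite{schmudgen2017moment}, the latter holds if and only if $H_n(\tilde{\bm}) \succeq 0$ and $-S_n(\tilde{\bm}) \succeq 0$ for every $n \in \mathbb{N}_0$.

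Next, translate these PSD conditions back to the original sequence $\bm$. Let $T \in \mathbb{R}^{(n+1) \times (n+1)}$ be the lower-triangular matrix with entries $T_{ij} = \binom{i}{j}(-u)^{i-j}$ for $i \geq j$ and zero otherwise, which encodes the polynomial identity $(x-u)^i = \sum_j T_{ij}\, x^j$. A direct computation from this identity yields
\begin{align*}
H_n(\tilde{\bm}) = T\, H_n(\bm)\, T^\intercal, \qquad S_n(\tilde{\bm}) = T\bigl(S_n(\bm) - u\, H_n(\bm)\bigr) T^\intercal.
\end{align*}
Since $T$ is lower-triangular with unit diagonal it is invertible, so conjugation by $T$ preserves positive semidefiniteness. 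Hence $H_n(\tilde{\bm}) \succeq 0 \iff H_n(\bm) \succeq 0$ and $-S_n(\tilde{\bm}) \succeq 0 \iff u\, H_n(\bm) - S_n(\bm) \succeq 0$, which is exactly condition~\eqref{mom-minus}. The $[-u, \infty)$ case follows identically: substituting $y = x+u$ leads to $S_n(\tilde{\bm}) = T\bigl(S_n(\bm) + u H_n(\bm)\bigr)T^\intercal$ with $T_{ij} = \binom{i}{j} u^{i-j}$, and the standard Stieltjes criterion for $[0,\infty)$ gives~\eqref{mom-plus}.

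The main obstacle is simply the matrix identity displayed above: one must verify entry by entry that $(T H_n(\bm) T^\intercal)_{ij} = \tilde m_{i+j}$ and $(T S_n(\bm) T^\intercal)_{ij} = \int x\, (x-u)^i (x-u)^j\, d\zeta$, so that subtracting $u\, (T H_n(\bm) T^\intercal)_{ij}$ recovers $\tilde m_{i+j+1}$. Both reductions follow from the binomial theorem together with the observation that $(x-u)^{i+j+1} = (x-u)(x-u)^i(x-u)^j = x(x-u)^i(x-u)^j - u(x-u)^i(x-u)^j$, so once the algebraic identities are in place the argument is mechanical.
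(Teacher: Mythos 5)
Your proof is correct and follows exactly the route the paper indicates: the paper gives no proof of its own, merely remarking that the $u=0$ case from \cite{schmudgen2017moment} "can be easily adapted to any $u \in \mathbb{R}$ through a simple change of variables," and your argument is precisely that adaptation, carried out in full via the congruence $H_n(\tilde{\mathbf{m}}) = T H_n(\mathbf{m}) T^\intercal$ and $S_n(\tilde{\mathbf{m}}) = T\left(S_n(\mathbf{m}) \mp u H_n(\mathbf{m})\right) T^\intercal$ with the unit-lower-triangular binomial matrix $T$. The details (Vandermonde convolution for the Hankel identity, invertibility of $T$ preserving semidefiniteness in both directions) are all sound.
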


The positive (semi)definiteness of a symmetric matrix can be certified using \textit{Sylvester's criterion}. 
\begin{theorem}[Sylvester's criterion \cite{meyer2000matrix}]
A matrix $M$ is positive semidefinite, if and only if, the determinant of every principal submatrix is non-negative. Moreover, $M$ is positive definite, if and only if, the determinant of every leading principal submatrix $R$ is positive.
\end{theorem}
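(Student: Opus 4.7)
The plan is to prove the positive semidefinite and positive definite statements separately, handling the forward and backward directions of each. Both forward directions are quick linear-algebra observations, whereas the backward directions are the substantive part: the PSD case I would attack through the coefficients of the characteristic polynomial, and the PD case through an induction on the matrix size.

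For the forward directions of both characterizations, the key observation is that for any index set $\mathcal{J} \subseteq \{1,\dots,n\}$ and any vector $\mathbf{w} \in \mathbb{R}^{|\mathcal{J}|}$, if $\mathbf{v} \in \mathbb{R}^n$ denotes the zero-padded extension placing the entries of $\mathbf{w}$ at the coordinates in $\mathcal{J}$, then $\mathbf{w}^\intercal M_{\mathcal{J}} \mathbf{w} = \mathbf{v}^\intercal M \mathbf{v}$. Hence $M \succeq 0$ (resp. $M \succ 0$) forces every principal submatrix $M_{\mathcal{J}}$ to inherit the same property, and therefore $\det M_{\mathcal{J}}$—being the product of the eigenvalues of a symmetric PSD (resp. PD) matrix—is non-negative (resp. positive).

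For the backward direction of the PSD characterization, I would exploit the identity
\[
\det(\lambda I - M) = \lambda^n - e_1 \lambda^{n-1} + e_2 \lambda^{n-2} - \cdots + (-1)^n e_n,
\]
where $e_k$ is the sum of all $k \times k$ principal minors of $M$. Under the hypothesis, each $e_k \geq 0$. Substituting $\lambda = -\mu$ with $\mu > 0$ gives $(-1)^n(\mu^n + e_1 \mu^{n-1} + \cdots + e_n)$, whose magnitude is strictly positive. Thus $M$ has no strictly negative real eigenvalue, and since $M$ is symmetric all of its eigenvalues are real and hence non-negative, yielding $M \succeq 0$.

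The main obstacle is the backward direction of the PD characterization, where one only has information about the $n$ leading principal submatrices rather than all $2^n - 1$ principal submatrices. I would argue by induction on $n$: the case $n=1$ is trivial. For the inductive step, partition
\[
M = \begin{bmatrix} R & \mathbf{r} \\ \mathbf{r}^\intercal & M_{nn} \end{bmatrix},
\]
where $R$ is the leading $(n-1)\times(n-1)$ principal submatrix. By hypothesis all leading principal minors of $R$ are positive, so by the inductive hypothesis $R \succ 0$ and in particular $R$ is invertible. The Schur-complement identity gives $\det M = \det R \cdot (M_{nn} - \mathbf{r}^\intercal R^{-1} \mathbf{r})$, and since both $\det M > 0$ and $\det R > 0$, the Schur complement $s := M_{nn} - \mathbf{r}^\intercal R^{-1}\mathbf{r}$ is positive. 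A block-$LDL^\intercal$ factorization then writes $M$ as a congruence of $\operatorname{diag}(R, s)$, and since both blocks are PD, so is $M$. This completes the induction.
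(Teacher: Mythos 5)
Your proof is correct. The paper does not prove this theorem at all---it is quoted as a known result with a citation to Meyer's textbook---so there is no in-paper argument to compare against; your write-up is the standard one (zero-padding for the easy directions, the characteristic-polynomial expansion $\det(\lambda I - M)=\sum_{k}(-1)^k e_k\lambda^{n-k}$ with $e_k$ the sum of $k\times k$ principal minors to rule out negative eigenvalues in the semidefinite case, and induction via the Schur complement and an $LDL^\intercal$ congruence in the definite case), and each step is sound, including the crucial use of \emph{all} principal minors rather than only the leading ones in the semidefinite direction, where leading minors alone would not suffice.
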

\section{Lower Bounds on the Spectral Radius} \label{sec:lower}

The supports of the spectral measures in Definition \eqref{def:spectral-measures} are contained in the interval 
$[-\rho, \rho]$ and their moments can be written in terms of walks in $\G$. Since the moments of a measure impose constraints on its support, the number of walks in $\G$ imposes constraints on $\rho$, as stated below.

\begin{lemma} \label{lem:sdp}
For a graph $\G$, let $\bm$ be the sequence of moments of any measure supported on the spectrum of $\G$. Then, for any finite set $\mathcal{J} \subset \mathbb{N} _0$,
\begin{align}
    \rho H_{\mathcal{J}}(\mathbf{m}) - S_{\mathcal{J}}(\mathbf{m}) &\succeq 0, \label{mom-minus-J}\\ 
     \rho H_{\mathcal{J}}(\mathbf{m}) + S_{\mathcal{J}}(\mathbf{m}) &\succeq 0  \label{mom-plus-J} ,
\end{align}
\noindent where $H_{\mathcal{J}}(\bm)$ and $S_{\mathcal{J}}(\bm)$ are submatrices of $H_n(\bm)$ and $S_n(\bm)$, defined in \eqref{thm:hamburger} and \eqref{thm:stieltjes}, respectively.
\end{lemma}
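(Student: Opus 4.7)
The plan is to apply Stieltjes' theorem (Theorem~\ref{thm:stieltjes}) twice, with $u = \rho$, and then pass to principal submatrices. The key observation is that the spectrum of $\G$ is a finite set of real numbers lying entirely in $[-\rho, \rho]$, hence it is contained in both $(-\infty, \rho]$ and $[-\rho, \infty)$. Consequently, any measure $\bm$ whose support lies in the spectrum of $\G$ is simultaneously a $(-\infty, \rho]$-moment sequence and a $[-\rho, \infty)$-moment sequence in the sense of Definition~\ref{def:moment-seq}.

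First I would invoke Theorem~\ref{thm:stieltjes} with $u = \rho$. The $(-\infty, \rho]$ direction yields, for every $n \in \mathbb{N}_0$,
\begin{align*}
    H_n(\bm) \succeq 0, \qquad \rho H_n(\bm) - S_n(\bm) \succeq 0,
\end{align*}
while the $[-\rho, \infty)$ direction yields
\begin{align*}
    H_n(\bm) \succeq 0, \qquad \rho H_n(\bm) + S_n(\bm) \succeq 0.
\end{align*}
This already establishes \eqref{mom-minus-J} and \eqref{mom-plus-J} in the special case $\mathcal{J} = \{0, 1, \dots, n\}$.

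Second, I would extend this to an arbitrary finite $\mathcal{J} \subset \mathbb{N}_0$ by a submatrix argument. Setting $n := \max \mathcal{J}$, the matrix $\rho H_{\mathcal{J}}(\bm) \pm S_{\mathcal{J}}(\bm)$ is obtained from $\rho H_n(\bm) \pm S_n(\bm)$ by keeping only the rows and columns whose indices lie in $\mathcal{J}$; this is precisely a principal submatrix. Since any principal submatrix of a positive semidefinite matrix is positive semidefinite (immediate from the quadratic form definition, by embedding a vector indexed by $\mathcal{J}$ into $\mathbb{R}^{n+1}$ with zeros outside $\mathcal{J}$, or alternatively via Sylvester's criterion stated above), we conclude that both $\rho H_{\mathcal{J}}(\bm) - S_{\mathcal{J}}(\bm) \succeq 0$ and $\rho H_{\mathcal{J}}(\bm) + S_{\mathcal{J}}(\bm) \succeq 0$.

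I do not expect a genuine obstacle here; the only subtlety is being careful that the notation $H_{\mathcal{J}}, S_{\mathcal{J}}$ indeed corresponds to principal submatrices of the full Hankel matrices (their $(i,j)$-entries for $i,j \in \mathcal{J}$ are $m_{i+j}$ and $m_{i+j+1}$, matching the entries of $H_n, S_n$), so that the zero-padding argument applies verbatim. Once that is noted, the lemma is essentially a direct corollary of Stieltjes' theorem combined with the hereditary property of positive semidefiniteness under principal submatrices.
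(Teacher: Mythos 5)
Your proposal is correct and follows essentially the same route as the paper: apply Stieltjes' theorem with $u = \rho$ (since the spectrum lies in $[-\rho,\rho]$, hence in both $(-\infty,\rho]$ and $[-\rho,\infty)$) to get $\rho H_n(\bm) \pm S_n(\bm) \succeq 0$, then pass to the principal submatrices indexed by $\mathcal{J}$ using the hereditary property of positive semidefiniteness. Your explicit zero-padding justification is a slightly more self-contained version of the paper's appeal to Sylvester's criterion, but the argument is the same.
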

\begin{proof}
Since $\mathbf{m}$ corresponds to the sequence of moments of a measure whose support is contained in $[-\rho, \rho]$, it follows that $\rho$ must satisfy the necessary conditions $\eqref{mom-minus}$ and $\eqref{mom-plus}$. Furthermore, every leading principal submatrix of a positive semidefinite matrix is also positive semidefinite by Sylvester's criterion; hence, the matrix inequalities \eqref{mom-minus-J} and \eqref{mom-plus-J} follow.
\end{proof}

As stated in Lemma~2.3, the moments of all the three measures defined in Definition \ref{def:spectral-measures} can be written in terms of walks in the graph. Since the supports of these three measures are equal to the eigenvalue spectrum of $\G$, we can apply Lemma~\ref{lem:sdp} to the moment sequences obtained by counting different types of walks in the graph. Using the above Lemma, we can use a truncated sequence of moments to find a lower bound on $\rho$ by solving a \textit{semidefinite program} \cite{boyd2004convex}, as stated below:

\begin{theorem} \label{thm:numerical}
The solution to the following semidefinite program is a lower bound on the spectral radius of $\G$
\begin{align*}
\begin{array}{cl}
{\displaystyle \min_{u}} & u \\
{\text { s.t. }} & {uH_{n}(\mathbf{m}) - S_{n}(\mathbf{m}) \succeq 0,} \\
{} & {uH_{n}(\mathbf{m}) + S_{n}(\mathbf{m}) \succeq 0,}
\end{array}
\end{align*}
where $\mathbf{m} = (m_0, m_1, \dots, m_{2n + 1})$ is a truncated sequence of moments of any measure supported on the eigenvalue spectrum of $\G$.
\end{theorem}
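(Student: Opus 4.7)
The plan is to prove Theorem~\ref{thm:numerical} by exhibiting $\rho$ itself as a feasible point of the semidefinite program and then invoking optimality to conclude that the optimum value is at most $\rho$. Since the SDP minimizes $u$ over the feasible set defined by the two matrix inequalities, producing any feasible $u_0$ automatically yields $u^\star \le u_0$; if $u_0 = \rho$ works, then $u^\star \le \rho$, i.e.\ the optimum is a lower bound on the spectral radius, which is exactly the statement.

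The first step is to note that each of the three candidate sequences in Lemma~\ref{lemma:moms-walks} is the moment sequence of a measure ($\mu_\G$, $\mu_\G^{(i)}$, or $\nu_\G$) whose support lies in the spectrum of $A$, and therefore in the interval $[-\rho,\rho]$. Consequently, Lemma~\ref{lem:sdp} applies to any such $\mathbf{m}$: for every finite index set we have
\begin{align*}
\rho\, H_{\mathcal{J}}(\mathbf{m}) - S_{\mathcal{J}}(\mathbf{m}) \succeq 0, \qquad \rho\, H_{\mathcal{J}}(\mathbf{m}) + S_{\mathcal{J}}(\mathbf{m}) \succeq 0.
\end{align*}
Taking $\mathcal{J} = \{0,1,\dots,n\}$ recovers precisely $H_n(\mathbf{m})$ and $S_n(\mathbf{m})$, the matrices appearing in the SDP constraints. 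Thus $u = \rho$ satisfies both semidefinite constraints, so $\rho$ belongs to the feasible set of the program.

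The second step is immediate: since the program minimizes $u$ over its feasible set and $\rho$ is feasible, the optimal value $u^\star$ satisfies $u^\star \le \rho$. Hence $u^\star$ is a lower bound on the spectral radius, as claimed. Note that this proof only requires that $\mathbf{m}$ be the moment sequence of some measure supported on the spectrum of $\G$; it does not matter which of the three spectral measures from Definition~\ref{def:spectral-measures} is used, and in fact different choices can give different (possibly incomparable) bounds.

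There is no substantive obstacle here: the entire content has already been packaged into Lemma~\ref{lem:sdp}, so the argument reduces to the observation that $\rho$ is feasible and that an SDP's optimum is bounded above by any feasible point. The only thing worth being careful about is aligning the truncation index $n$ used in the SDP with the matrices $H_n(\mathbf{m})$ and $S_n(\mathbf{m})$ that require moments up through $m_{2n+1}$, which matches the stated truncated sequence $(m_0, m_1, \dots, m_{2n+1})$.
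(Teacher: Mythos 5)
Your proposal is correct and follows exactly the argument the paper intends: the paper gives no explicit proof for this theorem, treating it as an immediate consequence of Lemma~\ref{lem:sdp}, which is precisely your observation that $u=\rho$ is feasible for the SDP and hence the minimum is at most $\rho$. No differences worth noting.
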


The above Theorem~can be used to compute numerical bounds on the spectral radius by setting the moments to be one of $\phi_k, \phi_k^{(i)}$, or $w_k$. Moreover, we can use Lemma~\ref{lem:sdp} to obtain closed-form bounds on $\rho$ involving a small number of moments for which the semidefinite program in Theorem~\ref{thm:numerical} can be solved analytically. The following corollary analyzes the case where $|\mathcal{J}| = 1$.
\begin{corollary}  \label{basic-niki-bound}
For an undirected graph $\G$, let $\bm$ be the sequence of moments of a measure supported on the spectrum of $\G$. Then, for every $k \in \mathbb{N}_0$ and even $q$,
\begin{align}
    \rho^k \ge \dfrac{m_{2s+k}}{m_{2s}}. \label{eq:basic-niki-bound}
\end{align}
\end{corollary}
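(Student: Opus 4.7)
The plan is to derive \eqref{eq:basic-niki-bound} from a simple pointwise comparison of integrands on the support of the measure, after which everything reduces to integrating against a non-negative measure. Let $\zeta$ denote the non-negative Borel measure whose moment sequence is $\bm$. By hypothesis, the support of $\zeta$ is contained in the spectrum of $\G$, and hence in the interval $[-\rho,\rho]$. For any $x$ in this interval and any $s,k\in\mathbb{N}_0$, the elementary facts $x^{2s}\ge 0$ and $|x|\le\rho$ yield
\begin{align*}
x^{2s+k}\ \le\ x^{2s}\,|x|^{k}\ \le\ \rho^{k}\,x^{2s}.
\end{align*}

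Next, I would integrate both sides against $\zeta$. Since $\zeta$ is non-negative, the inequality is preserved, giving $m_{2s+k}\le \rho^{k}\,m_{2s}$. Dividing by $m_{2s}$ recovers \eqref{eq:basic-niki-bound}, provided $m_{2s}>0$. For the three spectral measures of Definition~\ref{def:spectral-measures}, this positivity is guaranteed whenever $\rho>0$, since $m_{2s}=\sum_{l}w_{l}\,\lambda_{l}^{2s}$ is a non-negative sum in which the Perron eigenvalue $\lambda_{1}=\rho$ carries strictly positive weight (via the positive entries of the Perron eigenvector in each of the three cases); the case $\rho=0$ is trivial since then $\lambda_{l}=0$ for all $l$ and the bound reduces to $0\ge 0$.

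I expect no genuine obstacle: the pointwise bound is immediate from $|x|\le\rho$ on the support, and integration against a non-negative measure is routine. It is worth noting that the $k=1$ case can alternatively be extracted directly from Lemma~\ref{lem:sdp} with the single-index set $\mathcal{J}=\{s\}$, where $H_{\mathcal{J}}(\bm)=[m_{2s}]$ and $S_{\mathcal{J}}(\bm)=[m_{2s+1}]$, so that \eqref{mom-minus-J} reduces to $\rho\,m_{2s}-m_{2s+1}\ge 0$. For $k\ge 2$, however, single-entry principal submatrices of $H_n$ and $S_n$ no longer capture the required shift, so the pointwise route above is the natural path to the full hierarchy.
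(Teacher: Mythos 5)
Your argument is correct, but it follows a more elementary route than the paper. You prove the inequality directly: on the support, contained in $[-\rho,\rho]$, you use the pointwise bound $x^{2s+k}\le x^{2s}|x|^{k}\le\rho^{k}x^{2s}$ and integrate against the non-negative measure, which immediately yields $m_{2s+k}\le\rho^{k}m_{2s}$. The paper instead constructs the auxiliary atomic measure $\zeta_{2s,k}(x)=\sum_{i} z_i\lambda_i^{2s}\,\delta(x-\lambda_i^{k})$, supported on $\{\lambda_1^k,\dots,\lambda_n^k\}\subset[-\rho^{k},\rho^{k}]$ with moment sequence $(m_{2s},m_{2s+k},m_{2s+2k},\dots)$, and then invokes Lemma~\ref{lem:sdp} with a singleton index set to read off $\rho^{k}m_{2s}-m_{2s+k}\ge 0$. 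The two computations are the same integral inequality in disguise, but the framings buy different things: yours is self-contained (no appeal to Stieltjes' theorem or Lemma~\ref{lem:sdp}) and makes explicit the caveat $m_{2s}>0$ needed for the division, which the paper leaves implicit (your justification via ``strictly positive Perron weight'' is itself only airtight for connected graphs, but that is a peripheral point the paper does not address either); the paper's construction of $\zeta_{q,k}$ is designed so that the entire Lemma~\ref{lem:sdp} hierarchy applies to the re-indexed moments, which is exactly what gets reused for the $\lvert\mathcal{J}\rvert=2$ case in Lemma~\ref{lemma:dets} and Corollary~\ref{cor:largest-root}, where a pointwise comparison no longer suffices. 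Relatedly, your closing remark that single-entry submatrices fail to ``capture the required shift'' for $k\ge 2$ is true only for the Hankel matrices of the original sequence; the paper sidesteps this precisely by passing to $\zeta_{2s,k}$, whose $1\times 1$ Stieltjes condition is the desired inequality for every $k$ (and note your side computation with $\mathcal{J}=\{s\}$ should be read with the paper's $\mathbb{N}_0$-indexing of rows and columns for the entries $[m_{2s}]$ and $[m_{2s+1}]$ to come out as stated).
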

\begin{proof}
Let $\zeta(x)$ be an atomic measure supported on $\{\lambda_1, \lambda_2, \dots, \lambda_n \}$, defined as $\zeta(x) \coloneqq \sum_{i=1}^n z_i \delta(x - \lambda_i)$ and let $\{m_0, m_1, \dots \}$ be its moment sequence. For every $k \in \mathbb{N}_0$ and even $q$, we construct the following measure based on $\zeta(x)$:
\begin{align}
\zeta_{q,k}(x) \coloneqq \sum_{i=1}^n z_i \lambda_i^q \delta(x - \lambda_i^k).    \label{eq:new-measure}
\end{align}

We see that $\zeta_{q,k}(x)$ is supported on $\{\lambda_1^k, \lambda_2^k, \dots, \lambda_n^k \}$, and its moments are given by the sequence $\{m_q, m_{q+k}, m_{q+2k}, \dots \}$, for $k \in \mathbb{N}_0$. We note that, for even $q$, the support of the measure $\zeta_{q,k}(x)$ is contained in $[-\rho^k, \rho^k]$; thus, setting $J = \{1\}$, we use Lemma~\ref{lem:sdp} to obtain $\rho^k m_{q} - m_{q+k} \ge 0$, which implies \eqref{eq:basic-niki-bound}.
\end{proof}

If we set $\mathbf{m} = \{w_s\}_{s=0}^{\infty}$, this corollary gives an alternative proof for the lower bounds in \eqref{niki-bound}, proven by Nikiforov \cite{nikiforov2006walks}. It also generalizes these results to closed walks by using $\phi_k$ or $\phi_k^{(i)}$ as the sequence of moments. 

Another interesting result comes from applying Lemma~\ref{lem:sdp} to the case where $|\mathcal{J}| = 2$. Corollary \ref{cor:largest-root} below provides a new lower bound in terms of the largest root of a quadratic polynomial. Its proof relies on the following lemma.

\begin{lemma} \label{lemma:dets}
Let $\bm$ be the sequence of moments of a measure supported on the spectrum of $\G$. For $s, k \ge 0$, define the following matrices:
\begin{align}
    H^{(2s,k)}\coloneqq \left[\begin{array}{ll}{m_{2s}} & {m_{2s+k}} \\ {m_{2s+k}} & {m_{2s+2k}}\end{array}\right], \quad \text{and} \quad S^{(2s,k)}\coloneqq\left[\begin{array}{ll}{m_{2s+k}} & {m_{2s+2k}} \\ {m_{2s+2k}} & {m_{2 s + 3k}}\end{array}\right] . \label{eq:HS-matrices}
\end{align}
Whenever $\det\left(H^{(2s,k)}\right) \neq 0$, we have
\begin{align}
    \rho^{2k} \ge \dfrac{  \det(S^{(2s,k)})}{\det(H^{(2s,k)})}. \label{H-S}
\end{align}
\end{lemma}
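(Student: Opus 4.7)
The plan is to reduce the inequality to a $2\times 2$ instance of Lemma~\ref{lem:sdp} (equivalently, of Stieltjes' theorem, Theorem~\ref{thm:stieltjes}) applied to a suitably rescaled moment sequence, and then to extract the scalar determinantal inequality from the two resulting semidefinite conditions. First I would fix $s,k\ge 0$ and, borrowing the construction used in the proof of Corollary~\ref{basic-niki-bound}, introduce the atomic measure
\[
\zeta_{2s,k}(x)\coloneqq \sum_{i=1}^{n} z_i\,\lambda_i^{2s}\,\delta\bigl(x-\lambda_i^{k}\bigr),
\]
where $z_i$ are the (non-negative) weights of a measure $\zeta$ on the spectrum of $\G$ with moment sequence $\bm$. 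Because $\lambda_i^{2s}\ge 0$, this is again a non-negative measure; its support lies in $[-\rho^k,\rho^k]$, and a direct calculation gives that its $q$-th moment equals $m_{2s+qk}$. Reading \eqref{eq:HS-matrices}, the matrices $H^{(2s,k)}$ and $S^{(2s,k)}$ are precisely the size-$(n{=}1)$ Hankel and shifted-Hankel matrices of the moment sequence of $\zeta_{2s,k}$.

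With this identification, applying Stieltjes' theorem to $\zeta_{2s,k}$ on $[-\rho^k,\rho^k]$ (or, equivalently, repeating the proof of Lemma~\ref{lem:sdp} with $\rho$ replaced by $\rho^k$) yields the pair of semidefinite conditions
\[
\rho^{k}H^{(2s,k)} - S^{(2s,k)} \succeq 0, \qquad \rho^{k}H^{(2s,k)} + S^{(2s,k)} \succeq 0,
\]
which together say $-\rho^{k}H^{(2s,k)} \preceq S^{(2s,k)} \preceq \rho^{k}H^{(2s,k)}$.

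The main (and most delicate) step is converting this operator sandwich into the scalar inequality \eqref{H-S}. My preferred route is spectral: since $H\coloneqq H^{(2s,k)}$ is positive semidefinite (as the Hankel matrix of a non-negative measure) and, by hypothesis, $\det(H)\neq 0$, we in fact have $H\succ 0$, so we may form the symmetric matrix $B\coloneqq H^{-1/2}S^{(2s,k)}H^{-1/2}$. The two semidefinite inequalities above are then equivalent to $-\rho^{k} I \preceq B \preceq \rho^{k} I$, so both eigenvalues of $B$ lie in $[-\rho^{k},\rho^{k}]$ and their product satisfies
\[
\dfrac{\det S^{(2s,k)}}{\det H^{(2s,k)}} \;=\; \det B \;\le\; \rho^{2k};
\]
multiplying through by the positive number $\det H^{(2s,k)}$ yields \eqref{H-S}. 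An alternative, computation-heavier route would be to expand $\det(\rho^{k}H\pm S)\ge 0$ as two quadratic inequalities in $\rho^{k}$ and combine them, but the spectral argument makes transparent why $\rho^{2k}$ is the correct right-hand side and why the hypothesis $\det(H)\neq 0$ is exactly what is needed.
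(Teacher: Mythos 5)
Your proposal is correct, and it shares the paper's setup exactly: the measure $\zeta_{2s,k}$ borrowed from Corollary~\ref{basic-niki-bound}, followed by Lemma~\ref{lem:sdp} (Stieltjes) with a two-element index set to get $\rho^{k}H^{(2s,k)}\pm S^{(2s,k)}\succeq 0$. Where you diverge is in how the scalar inequality \eqref{H-S} is extracted from these two matrix inequalities. The paper stays with Rayleigh quotients: it pairs the leading eigenvector of $S^{(2s,k)}$ against the largest eigenvalue of $H^{(2s,k)}$ and the second eigenvector of $H^{(2s,k)}$ against the smallest eigenvalue of $S^{(2s,k)}$, obtaining $\rho^{k}\ge\gamma_{1}/\xi_{1}$ and $\rho^{k}\ge\gamma_{2}/\xi_{2}$ and multiplying; this forces a case split when $\gamma_{2}<0$ (where the bound is trivial) and an appeal to Perron--Frobenius to ensure $\gamma_{1}\ge 0$, which implicitly uses that the entries of $S^{(2s,k)}$ are nonnegative (true for walk counts, but not obviously for an arbitrary measure on the spectrum, which is how the lemma is stated). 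Your congruence argument, using $\det H^{(2s,k)}\neq 0$ together with $H^{(2s,k)}\succeq 0$ to form $B=H^{-1/2}S^{(2s,k)}H^{-1/2}$ and reading off $-\rho^{k}I\preceq B\preceq\rho^{k}I$, avoids both the case analysis and any sign assumption on the entries or eigenvalues of $S^{(2s,k)}$: both eigenvalues of $B$ lie in $[-\rho^{k},\rho^{k}]$, so $\det B=\det S^{(2s,k)}/\det H^{(2s,k)}\le\rho^{2k}$ in every case. So your route is slightly less elementary (it invokes the matrix square root and congruence-invariance of semidefiniteness) but cleaner and marginally more general, and it makes explicit why $\det H^{(2s,k)}\neq 0$, i.e.\ $H^{(2s,k)}\succ 0$, is precisely the hypothesis needed; the paper's route uses only Rayleigh's principle but at the cost of the eigenvector pairing and the extra sign bookkeeping.
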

\begin{proof}
Let $\zeta_{2s,k}(x)$ be the measure defined in \eqref{eq:new-measure}. The support of this measure is supported on $[-\rho^k, \rho^k]$ and has moment sequence $(m_{2s}, m_{2s+k}, m_{2s+2k}, \dots )$. Applying Lemma~\ref{lem:sdp} with $\mathcal{J} = \{1, 2\}$ we obtain
\begin{align}
    \rho^{k} H^{(2s,k)} \pm S^{(2s,k)} \succeq 0 \implies \rho^k \ge \dfrac{ \mathbf{x}^{\intercal}S^{(2s,k)} \mathbf{x} }{\mathbf{x}^{\intercal} H^{(2s,k)} \mathbf{x}}, \label{H-S posdef} 
\end{align}
\noindent for every non-zero $\mathbf{x} \in \mathbb{R}^2$. From Theorem~\ref{thm:hamburger}, we know that $H^{(2s, k)} \succeq 0$; hence, its eigenvalues $\xi_1$ and $\xi_2$ satisfy $\xi_1 \ge \xi_2 \ge 0$. By Rayleigh principle, we have that
\begin{align}
    \dfrac{ \mathbf{x}^\intercal H^{(2s,k)} \mathbf{x} }{\mathbf{x}^\intercal \mathbf{x}} \le \xi_1, \qquad
    \dfrac{ \mathbf{w}^\intercal H^{(2s,k)} \mathbf{w} }{\mathbf{w}^\intercal \mathbf{w}} = \xi_2, \label{eq:rayleigh1}
\end{align}

\noindent for every non-zero $\mathbf{x}$ and for $\mathbf{w}$ being the eigenvector corresponding to the second eigenvalue of $H^{(2s,k)}$. Similarly, let $\gamma_1 \ge \gamma_2$ be the eigenvalues of $S^{(2s,k)}$. By Perron-Frobenius, we know that $\gamma_1 \ge 0$. If $\gamma_2 < 0$, then $\det(S^{(2s,k)}) < 0$ and the inequality \eqref{H-S posdef} is trivial. If instead $\gamma_2\ge 0$, then
\begin{align}
    \dfrac{  \mathbf{x}^\intercal S^{(2s,k)} \mathbf{x} }{\mathbf{x}^\intercal \mathbf{x}} \ge  \gamma_2  , \qquad  \dfrac{  \mathbf{v}^\intercal S^{(2s,k)} \mathbf{v}  }{\mathbf{v}^\intercal \mathbf{v}} =   \gamma_1, \label{eq:rayleigh2}
\end{align}

\noindent for any non-zero $\mathbf{x}$ and for $\mathbf{v}$ equal to the leading eigenvector of $S^{(2s,k)}$. We plug vectors $\mathbf{v}$ and $\mathbf{w}$ into \eqref{H-S posdef} to obtain
\begin{align*}
    \rho^k \ge \hspace{3pt} \dfrac{ \mathbf{\mathbf{v}}^{\intercal}S^{(2s,k)} \mathbf{v} }{\mathbf{v}^{\intercal} H^{(2s,k)} \mathbf{v}} \ge \dfrac{ \gamma_1 }{ \xi_1},\\
    \rho^k \ge  \dfrac{ \mathbf{\mathbf{w}}^{\intercal}S^{(2s,k)} \mathbf{w} }{\mathbf{w}^{\intercal} H^{(2s,k)} \mathbf{w}} \ge \dfrac{\gamma_2}{\xi_2},
\end{align*}
\noindent where the last inequalities come from \eqref{eq:rayleigh1} and \eqref{eq:rayleigh2}. Multiplying both inequalities we obtain
\begin{align*}
    \rho^{2k} \ge \dfrac{\gamma_1 \gamma_2 }{\xi_1 \xi_2} = \dfrac{ \det(S^{(2s,k)})}{\det(H^{(2s,k)})}.
\end{align*}
\end{proof}

We are now ready to prove the following corollary.
\begin{corollary} \label{cor:largest-root}
Let $\bm$ be the sequence of moments of a measure supported on the spectrum of $\G$. For $s, k \in \mathbb{N}_0$, let $H^{(2s,k)}$ and $S^{(2s,k)}$ be defined as in \eqref{eq:HS-matrices} and define the following matrix:
\begin{align*}
    F^{(2s,k)}\coloneqq \left[\begin{array}{ll}{m_{2s+k}} & {m_{2s+3k}} \\ {m_{2s}} & {m_{2s+2k}}\end{array}\right].
\end{align*}
Then, whenever $\det\left(H^{(2s,k)}\right) \neq 0$, we have
 \begin{align}
     \rho \ge \left( \dfrac{ \left| \det\left( F^{(2s, k)}\right)\right| + \sqrt{\det\left( F^{(2s, k)}\right)^2 - 4 \det\left(H^{(2s,k)} S^{(2s,k)}\right)}}{2 \det\left(H^{(2s,k)}\right)}\right)^{1/k}.
 \end{align}
\end{corollary}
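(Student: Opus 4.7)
The plan is to extract a scalar (quadratic) inequality in $t:=\rho^k$ from the two $2\times 2$ semidefinite inequalities that appeared in the proof of Lemma~\ref{lemma:dets}, and then pick out the correct branch of its solution set using the weaker bound already provided by that lemma.

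First I would recall from Lemma~\ref{lemma:dets} that the measure $\zeta_{2s,k}$ has support in $[-\rho^k,\rho^k]$, so Lemma~\ref{lem:sdp} applied to $\mathcal{J}=\{1,2\}$ gives $t\,H^{(2s,k)} \pm S^{(2s,k)} \succeq 0$ with $t=\rho^k \ge 0$. Taking determinants of these $2\times 2$ PSD matrices yields two scalar inequalities. A direct entry-wise expansion (which I would carry out once but not reproduce in full) shows that
\begin{equation*}
    \det\!\left(t H^{(2s,k)} \pm S^{(2s,k)}\right) \;=\; \det(H^{(2s,k)})\,t^2 \;\pm\; \det(F^{(2s,k)})\,t \;+\; \det(S^{(2s,k)}),
\end{equation*}
which is precisely where the mysterious matrix $F^{(2s,k)}$ enters: its determinant equals $m_{2s+k}m_{2s+2k}-m_{2s}m_{2s+3k}$, the cross-term that appears (with opposite signs) in the two determinant expansions. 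Combining the two non-negativity conditions and using $t\ge 0$, the binding inequality is
\begin{equation*}
    \det(H^{(2s,k)})\,t^2 \;-\; \bigl|\det(F^{(2s,k)})\bigr|\,t \;+\; \det(S^{(2s,k)}) \;\ge\; 0.
\end{equation*}
Since $H^{(2s,k)}\succeq 0$ and $\det(H^{(2s,k)})\neq 0$, the leading coefficient is strictly positive, so this quadratic in $t$ has (possibly complex) roots $r_\pm$ given exactly by the right-hand side of the claimed bound.

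The main obstacle is then the root-selection step: the quadratic inequality alone leaves the two branches $t\le r_-$ and $t\ge r_+$ open, and I need to rule out the lower branch. Here I would reuse Lemma~\ref{lemma:dets}, which already gives $t^2 \ge \det(S^{(2s,k)})/\det(H^{(2s,k)}) = r_- r_+$. When $\det(S^{(2s,k)})\ge 0$ both roots are non-negative and the constraint $t\ge\sqrt{r_- r_+}$, coupled with the fact that the geometric mean of $r_-,r_+$ lies in $[r_-,r_+]$, intersected with the admissible set $(-\infty,r_-]\cup[r_+,\infty)$, forces $t\ge r_+$. When $\det(S^{(2s,k)})<0$ we instead have $r_-<0\le t$, so again $t\ge r_+$. (If the discriminant is negative the bound is vacuous, which is consistent with the statement's implicit assumption.) Taking $k$-th roots of $\rho^k\ge r_+$ yields the claim; the algebra is routine and the only subtle ingredient is the branch selection, which is exactly where the auxiliary Lemma~\ref{lemma:dets} pays off.
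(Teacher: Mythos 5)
Your proposal is correct and follows essentially the same route as the paper: it takes determinants of $\rho^k H^{(2s,k)} \pm S^{(2s,k)} \succeq 0$ to obtain the quadratic inequality $\det\left(H^{(2s,k)}\right)t^2 - \left|\det\left(F^{(2s,k)}\right)\right| t + \det\left(S^{(2s,k)}\right) \ge 0$ in $t=\rho^k$, and then invokes Lemma~\ref{lemma:dets} to exclude the lower-root branch. The only, immaterial, difference is in that last step: you use $\rho^{2k}\ge \det\left(S^{(2s,k)}\right)/\det\left(H^{(2s,k)}\right)=r_-r_+$ with a sign split on $\det\left(S^{(2s,k)}\right)$, whereas the paper substitutes the same bound back into the quadratic to conclude $\rho^k \ge \left|\det\left(F^{(2s,k)}\right)\right|/\left(2\det\left(H^{(2s,k)}\right)\right)$, the midpoint of the roots.
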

\begin{proof}
The inequality \eqref{H-S posdef} implies that $\det \left(\rho^k H^{(2s,k)} + S^{(2s,k)}\right) \ge 0$. This can be expanded to
\begin{align*}
    \det\left(\left[\begin{array}{ll}{\rho^k m_{2s} + m_{2s+k}} & {\rho^k m_{2s+k} + m_{2s+2k}} \\ {\rho^k m_{2s+k} + m_{2s+2k}} & {\rho^k m_{2s+2k} + m_{2s+3k}}\end{array}\right]\right) \ge 0,
\end{align*}
\noindent which simplifies to
\begin{align}
    \det\left(H^{(2s,k)}\right)\rho^{2k} - \det\left(F^{(2s,k)}\right) \rho^k + \det\left(S^{(2s,k)}\right) \ge 0. \label{quadratic rho-1}
\end{align}
Similarly, \eqref{H-S posdef} implies that $\det \left(\rho^k H^{(2s,k)} - S^{(2s,k)}\right) \ge 0$, which implies
\begin{align}
    \det\left(H^{(2s,k)}\right)\rho^{2k} + \det\left(F^{(2s,k)}\right) \rho^k + \det\left(S^{(2s,k)}\right) \ge 0. \label{quadratic rho-2}
\end{align}

Inequalities \eqref{quadratic rho-1} and \eqref{quadratic rho-2} are satisfied simultaneously if and only if
\begin{align}
    \det\left(H^{(2s,k)}\right)\rho^{2k} - \left|\det\left(F^{(2s,k)}\right)\right| \rho^k + \det\left(S^{(2s,k)}\right) \ge 0. \label{eq:quadratic-rho}
\end{align}
By Theorem~\ref{thm:hamburger} we have that $\det\left(H^{(2s,k)}\right) > 0$. Using Lemma~\ref{lemma:dets}, we know that $\det\left(S^{(2s,k)}\right) \le \det\left(H^{(2s,k)}\right) \rho^{2k} $, which we substitute into \eqref{eq:quadratic-rho} to yield
\begin{align*}
    2\det\left(H^{(2s,k)}\right)\rho^{2k} - \left| \det\left(F^{(2s,k)}\right)\right| \rho^k \ge 0.
\end{align*}
Since $\rho \ge 0$, we conclude that 
\begin{align}
    \rho^{k} \ge \dfrac{\left| \det\left(F^{(2s,k)}\right)\right|}{2\det\left(H^{(2s,k)}\right)}. \label{eq:larger-than-smallest-root}
\end{align}
Next, define the quadratic polynomial 
\begin{align*}
    P(r) = \det\left(H^{(2s,k)}\right)r^2 - \left|\det\left(F^{(2s,k)}\right)\right| r^k + \det\left(S^{(2s,k)}\right),
\end{align*}
which has a positive leading coefficient. From \eqref{eq:quadratic-rho} we know that $P(\rho^k) \ge 0$ and from \eqref{eq:larger-than-smallest-root} we know that $\rho^k$ is larger than the smallest root of $P$. This implies that $\rho^k$ is larger than the largest root of $P$ and the result follows.
\end{proof}

We can apply Corollary \ref{cor:largest-root} with $s=0$ and $ k=1$ to the closed-walks measure of a graph $\G$, leveraging the fact that $\phi_0 = n$, $\phi_1 = 0$, $\phi_2$ is twice the number of edges, and $\phi_3$ is three times the number of triangles in $\G$, as follows.
\begin{corollary} \label{cor:T-e}
For a graph $\G$ with $n$ vertices, $e$ edges and $T$ triangles, we have that
 \begin{align*}
     \rho \ge \dfrac{3T}{2e} + \sqrt{\left(\dfrac{3T}{2e}\right)^2 + \dfrac{2e}{n}}.
 \end{align*}
\end{corollary}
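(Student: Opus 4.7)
The plan is to specialize Corollary \ref{cor:largest-root} to the closed-walks measure $\mu_{\G}$ with parameters $s = 0$ and $k = 1$. By Lemma \ref{lemma:moms-walks}, the relevant moments are $m_j = \phi_j$, and the first four closed-walk counts admit clean combinatorial expressions: $\phi_0 = n$, $\phi_1 = 0$ (no self-loops in a simple graph), $\phi_2 = 2e$ (each edge traversed in each direction contributes a closed $2$-walk), and $\phi_3 = 6T$ (since a closed $3$-walk in a simple graph must visit three distinct vertices forming a triangle, and each triangle admits three starting vertices and two orientations).

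The next step is to evaluate the three $2 \times 2$ determinants that appear in the closed-form bound. A direct substitution gives $\det(H^{(0,1)}) = \phi_0 \phi_2 - \phi_1^2 = 2en$, $\det(S^{(0,1)}) = \phi_1 \phi_3 - \phi_2^2 = -4e^2$, and $\det(F^{(0,1)}) = \phi_1 \phi_2 - \phi_3 \phi_0 = -6Tn$, so $|\det(F^{(0,1)})| = 6Tn$. The non-degeneracy hypothesis $\det(H^{(0,1)}) \neq 0$ holds as soon as $\G$ has at least one edge, which we may assume; otherwise the right-hand side of the claimed inequality is not even defined.

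The last step is purely algebraic manipulation. Plugging these three determinants into the bound of Corollary \ref{cor:largest-root} yields an expression of the form
\[
    \rho \ge \frac{6Tn + \sqrt{36 T^2 n^2 + 32 e^3 n}}{4 e n},
\]
and factoring $4n$ out of the radical as $\sqrt{4n}\sqrt{9 T^2 n + 8 e^3}$ lets the fraction split cleanly into $\frac{3T}{2e} + \sqrt{(3T/(2e))^2 + 2e/n}$, which is exactly the claimed inequality. There is no genuine obstacle here: the substantive analytic work was already carried out in the proof of Corollary \ref{cor:largest-root}, and the present corollary is an illustrative specialisation whose only subtlety is the bookkeeping factor $6$ (rather than $3$) in $\phi_3 = 6T$, which is essential for the constants in the final bound to agree.
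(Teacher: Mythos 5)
Your proof is correct and follows exactly the route the paper intends: specialize Corollary~\ref{cor:largest-root} with $s=0$, $k=1$ to the closed-walks measure, evaluate the three $2\times 2$ determinants ($\det H^{(0,1)}=2en$, $\det S^{(0,1)}=-4e^2$, $\det F^{(0,1)}=-6Tn$), and simplify; your algebra checks out, including the caveat that $e\ge 1$ is needed for $\det H^{(0,1)}\neq 0$. You are also right to insist on $\phi_3 = 6T$: the paper's prose saying ``$\phi_3$ is three times the number of triangles'' is a slip, since only $\operatorname{tr}(A^3)=6T$ produces the stated constants $3T/(2e)$ and $2e/n$, exactly as your bookkeeping shows.
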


Similarly, because $\phi_0^{(i)} = 1$, $\phi_1^{(i)} = 0$, $\phi_2^{(i)}$ is the degree of vertex $i$, and $\phi_3^{(i)}$ is twice the number of triangles touching vertex $i$, we can apply Corollary \ref{cor:largest-root} to the closed-walks measure for vertex $i$.
\begin{corollary} \label{cor:ti-di}
 Denoting by $d_i$ the degree of vertex $i$ and $T_i$ the number of triangles touching vertex $i$, we have
 \begin{align*}
     \rho \ge  \max_{i \in 1, \dots, n} \dfrac{T_i + \sqrt{T_i^2 + d_i^3}}{d_i}.
 \end{align*}
\end{corollary}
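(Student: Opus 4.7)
The plan is to apply Corollary \ref{cor:largest-root} directly to the closed-walks measure for vertex $i$, namely $\mu_{\G}^{(i)}$, whose moment sequence is $\bm = (\phi_0^{(i)}, \phi_1^{(i)}, \phi_2^{(i)}, \phi_3^{(i)}, \ldots)$. The three ingredients I need are the low-order moments; by the interpretation stated just before the corollary, these are $\phi_0^{(i)} = 1$, $\phi_1^{(i)} = 0$ (no length-$1$ closed walks), $\phi_2^{(i)} = d_i$ (one closed $2$-walk per neighbor), and $\phi_3^{(i)} = 2 T_i$ (two orientations per triangle through $i$).

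Next I would specialize the corollary to $s = 0$ and $k = 1$, writing out the three $2 \times 2$ matrices with the values above:
\begin{align*}
H^{(0,1)} = \begin{bmatrix} 1 & 0 \\ 0 & d_i \end{bmatrix}, \qquad
S^{(0,1)} = \begin{bmatrix} 0 & d_i \\ d_i & 2T_i \end{bmatrix}, \qquad
F^{(0,1)} = \begin{bmatrix} 0 & 2T_i \\ 1 & d_i \end{bmatrix}.
\end{align*}
A routine calculation gives $\det(H^{(0,1)}) = d_i$, $\det(S^{(0,1)}) = -d_i^2$, and $\det(F^{(0,1)}) = -2T_i$. Assuming $d_i > 0$ (the bound is vacuous for isolated vertices), the nondegeneracy hypothesis $\det(H^{(0,1)}) \neq 0$ of Corollary~\ref{cor:largest-root} holds.

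Plugging these determinants into the conclusion of Corollary~\ref{cor:largest-root} yields
\begin{align*}
\rho \ge \frac{2T_i + \sqrt{4T_i^2 - 4 \cdot d_i \cdot (-d_i^2)}}{2 d_i} = \frac{T_i + \sqrt{T_i^2 + d_i^3}}{d_i},
\end{align*}
after pulling a factor of $4$ out of the square root. Since this inequality holds for every vertex $i$, taking the maximum over $i \in \{1, \dots, n\}$ gives the stated bound.

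There is no real obstacle here; the entire argument is a bookkeeping exercise of identifying the first four moments of $\mu_{\G}^{(i)}$ with combinatorial quantities at vertex $i$ and substituting into the already-proven corollary. The only point requiring a line of comment is the trivial case $d_i = 0$, where the maximum is attained at some other vertex (or the graph is empty, in which case $\rho = 0$ and the bound is vacuously the maximum of an empty set or handled by convention).
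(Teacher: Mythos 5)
Your proposal is correct and follows exactly the route the paper intends: it identifies $\phi_0^{(i)}=1$, $\phi_1^{(i)}=0$, $\phi_2^{(i)}=d_i$, $\phi_3^{(i)}=2T_i$ as the moments of $\mu_{\G}^{(i)}$ and substitutes them into Corollary~\ref{cor:largest-root} with $s=0$, $k=1$, which is all the paper does (it leaves the determinant bookkeeping implicit). Your determinant values and the simplification to $\rho \ge \bigl(T_i + \sqrt{T_i^2 + d_i^3}\bigr)/d_i$ check out, and flagging $d_i>0$ for the hypothesis $\det\bigl(H^{(0,1)}\bigr)\neq 0$ is a reasonable extra care.
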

Notice how Corollary \ref{cor:ti-di} implies
\begin{align*}
    \rho \ge \dfrac{T_{\scriptscriptstyle \Delta} + \sqrt{T_{\scriptscriptstyle \Delta}^2 + \Delta^3}}{\Delta} \ge \sqrt{\Delta},
\end{align*}
where $\Delta$ is the maximum of the vertex degrees in $\G$ and $T_{\scriptscriptstyle \Delta}$ is the maximum triangle count amongst vertices with degree $\Delta$, improving the bound in \eqref{favaron}.\\

\begin{table}[h!]
    \centering
    \begin{adjustbox}{center}
    \def\arraystretch{3.0}
    \begin{tabular}{|c|c|c|c|}
                \hline
        General bound & Special cases & \begin{minipage}{0.78in} \vspace{-3mm} \centering $\vphantom{\sum^k}$ Moment sequence $\vphantom{\sum^k}$ \end{minipage}& Reference\\ \hline
          & $\rho \ge \left(\dfrac{w_{2s+k}}{w_{2s}}\right)^{1/k \hspace{5pt} }$ \cite{nikiforov2006walks} & \begin{minipage}{0.78in}\centering walks  \end{minipage} & \\[1ex] \cline{2-3}
         $\rho \ge \left(\dfrac{m_{2s+k}}{m_{2s}}\right)^{1/k} $ & $\rho \ge \left(\dfrac{\phi_{2s+k}}{\phi_{2s}}\right)^{1/k} $ & closed walks& \begin{minipage}{0.6in} Corollary \ref{basic-niki-bound}  \centering \end{minipage}\\[1ex] \cline{2-3}
         & $\rho \ge \left(\dfrac{\phi_{2s+k}^{(i)}}{\phi_{2s}^{(i)}}\right)^{1/k} $ & \begin{minipage}{0.78in} \vspace{1mm} \centering $\vphantom{\sum^k}$ closed walks from node $i$ \vspace{2mm} \end{minipage}& \\[1ex] \hline
         & $ \rho \ge \dfrac{3T}{2e} + \sqrt{\left(\dfrac{3T}{2e}\right)^2 + \dfrac{2e}{n}}$ & closed walks & \begin{minipage}{0.6in} Corollary \ref{cor:T-e}  \centering \end{minipage} \\[1ex] \cline{2-4}
         $ \rho \ge \left( \dfrac{ \left| \det\left( F^{\scriptscriptstyle(2s, k)}\right)\right| + \sqrt{\det\left( F^{\scriptscriptstyle (2s, k)}\right)^2 - 4 \det\left(H^{\scriptscriptstyle (2s,k)} S^{\scriptscriptstyle (2s,k)}\right)}}{2 \det\left(H^{\scriptscriptstyle (2s,k)}\right)}\right)^{1/k}$ & $\displaystyle \rho \ge  \max_{i \in 1, \dots, n} \dfrac{T_i + \sqrt{T_i^2 + d_i^3}}{d_i}$ & \begin{minipage}{0.78in} \vspace{1mm} \centering $\vphantom{\sum^k}$ closed walks from node $i$ \vspace{2mm} \end{minipage} & \multirow{2}{*}{\begin{minipage}{0.6in} \centering Corollary \ref{cor:ti-di}  \end{minipage} }\\ \cline{2-3}
         & $ \hspace{15pt} \rho \ge \sqrt{\Delta}^{\hspace{15pt}}$ \cite{favaron1993some} & \begin{minipage}{0.78in} \vspace{1mm} \centering $\vphantom{\sum^k}$ closed walks from node $i$ \vspace{2mm} \end{minipage}&  \\ \hline 
    \end{tabular}
    \end{adjustbox}
    \caption{Summary of lower bounds on the spectral radius $\rho$, obtained as corollaries of Lemma~\ref{lem:sdp}. The number of $k$-walks, 
    closed $k$-walks and closed $k$-walks from node $i$, in $\G$, are denoted by $w_k$, $\phi_k$ and $\phi_k^{(i)}$, respectively. We write $n$, $e$ and $T$ to denote the 
    number of nodes, edges and triangles in $\G$. We write $d_i$ and $T_i$ to denote the degree of node $i$ and the 
    number of triangles touching node $i$, respectively. The largest 
    node degree is denoted by $\Delta$.} 
    \label{tab:lower-bounds}
\end{table}

\section{Upper bounds on the spectral radius} \label{sec:upper}

In this section, we make use of Theorems \ref{thm:hamburger} and \ref{thm:stieltjes} to derive upper bounds on $\rho$. These bounds are based on the analysis of three new measures similar to the ones in \eqref{def:spectral-measures}. In particular, these new measures, denoted by $\tilde{\mu}_{\G}$, $\tilde{\mu}_{\G}^{(i)}$, and $\tilde{\nu}_{\G}$, are the result of excluding the summand corresponding to the Dirac delta centered at $\lambda_1 = \rho$ from the definitions of $\mu_{\G}$, $\mu_{\G}^{(i)}$ and $\nu_{\G}$, respectively. Therefore, these three new measures are supported on the set $\{\lambda_2, \lambda_3, \dots, \lambda_n\} \subset [-\rho, \rho]$, and their moments are, respectively,  
\begin{align}
    m_k(\tilde{\mu}_{\G}) = \sum_{l=2}^n \lambda_l^k &= \phi_{k} - \rho^k, \label{moms-bulk-cw}\\
    m_k(\tilde{\mu}_{\G}^{(i)}) = \sum_{l=2}^n c_l^{(i)}\lambda_l^k   &= \phi_{k}(i) - c_1^{(i)}\rho^k, \label{moms-bulk-cw-i}\\
     m_k(\tilde{\nu}_{\G}) = \sum_{l=2}^n c_l \lambda_l^k &= w_k - c_1 \rho^k. \label{moms-bulk-w}
\end{align}
Applying Hamburger's Theorem~to these measures we obtain the following result:
\begin{lemma} \label{lem:upper-bounds-phi}
Let $\bm$ be the sequence of moments of an atomic measure $\mu(x) = \sum_{i=1}^n \alpha_i \delta(x- \lambda_i)$ supported on the spectrum of $\G$, and define the infinite-dimensional Hankel matrix $P$ given by:
\begin{align*}
    P :=\left[\begin{array}{cccc}
    1 & \rho & \rho^2 & {\dots} \\ \rho & \rho^2 & \rho^3 & {\dots} \\
    {\rho^{2}} & {\rho^{3}} & {\rho^{4}} & {\dots} \\
    {\vdots} & {\vdots} & {\vdots} & {\ddots}  \end{array}\right] .
\end{align*}
Hence, for any finite $\mathcal{J} \subset \mathbb{N}_0$,
\begin{align}
H_{\mathcal{J}}(\bm) - \alpha_1 P_{\mathcal{J}} \succeq 0, \label{non-sdp}
\end{align}
\noindent where $H_{\mathcal{J}}(\bm)$ is a submatrix of the Hankel matrix of moments $H_{\max(\mathcal{J})}$ defined in \eqref{def:hankel-moms}.
\end{lemma}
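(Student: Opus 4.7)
The plan is to realize $H_{\mathcal{J}}(\bm) - \alpha_1 P_{\mathcal{J}}$ as the Hankel matrix of the moments of an explicit non-negative Borel measure, and then invoke Hamburger's theorem together with the fact that principal submatrices of positive semidefinite matrices are themselves positive semidefinite.

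Concretely, I would first introduce the ``bulk'' measure obtained from $\mu$ by stripping off the atom at the Perron eigenvalue $\lambda_1 = \rho$, namely
\begin{align*}
\tilde\mu(x) \;\coloneqq\; \mu(x) - \alpha_1 \delta(x-\lambda_1) \;=\; \sum_{i=2}^{n}\alpha_i\,\delta(x-\lambda_i).
\end{align*}
Assuming (as is the case for all three spectral measures of Definition~\ref{def:spectral-measures}) that the weights $\alpha_i$ are non-negative, $\tilde\mu$ is a bona fide Borel measure supported on the subset $\{\lambda_2,\dots,\lambda_n\}$ of $\mathbb{R}$. I would then compute its moment sequence directly,
\begin{align*}
m_k(\tilde\mu) \;=\; \int_{\mathbb{R}} x^k\,\mathrm{d}\tilde\mu(x) \;=\; m_k - \alpha_1 \rho^k,
\end{align*}
and observe that the Hankel matrix of the Dirac mass $\delta(x-\rho)$, whose $k$-th moment is $\rho^k$, has $(i,j)$-entry equal to $\rho^{i+j}$, which is precisely the structure of the infinite matrix $P$ in the statement.

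Next, I would apply Hamburger's theorem (Theorem~\ref{thm:hamburger}) to the $\mathbb{R}$-moment sequence of $\tilde\mu$: for every $n \in \mathbb{N}_0$, the full Hankel matrix $H_n\!\left(m(\tilde\mu)\right)$ is positive semidefinite. Rewriting this matrix in terms of the moments of the original measure $\mu$ gives
\begin{align*}
H_n\!\left(m(\tilde\mu)\right) \;=\; H_n(\bm) - \alpha_1\, P_{\{0,1,\dots,n\}} \;\succeq\; 0,
\end{align*}
where $P_{\{0,1,\dots,n\}}$ denotes the leading $(n+1)\times(n+1)$ block of $P$. For an arbitrary finite index set $\mathcal{J} \subset \mathbb{N}_0$, take $n = \max(\mathcal{J})$; then $H_{\mathcal{J}}(\bm) - \alpha_1 P_{\mathcal{J}}$ is exactly the principal submatrix of $H_n(\bm) - \alpha_1 P_{\{0,\dots,n\}}$ indexed by $\mathcal{J}$. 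By Sylvester's criterion (or equivalently the standard fact that principal submatrices of PSD matrices are PSD), this principal submatrix inherits positive semidefiniteness, which is the desired inequality \eqref{non-sdp}.

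The argument is largely bookkeeping once the right decomposition is in place; the only conceptual point that needs to be made carefully is the identification of $\alpha_1 P_{\mathcal{J}}$ with the Hankel block associated to the removed atom at $\rho$, and the implicit use of non-negativity of the weights $\alpha_i$ to guarantee that $\tilde\mu$ is itself a measure (so that Hamburger's theorem applies). I do not anticipate a genuine obstacle beyond this; in particular, there is no need to invoke the Stieltjes refinement here, since only Hamburger's (unconstrained) characterization is required.
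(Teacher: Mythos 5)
Your proof is correct and follows essentially the same route as the paper: identify $H_{\mathcal{J}}(\bm) - \alpha_1 P_{\mathcal{J}}$ as the (sub-)Hankel matrix of the measure obtained by deleting the atom at $\lambda_1=\rho$, then apply Hamburger's theorem and pass to principal submatrices via Sylvester's criterion. Your explicit remarks on the non-negativity of the weights $\alpha_i$ and the identification of $\alpha_1 P$ with the Hankel matrix of the Dirac mass at $\rho$ are details the paper leaves implicit, but the argument is the same.
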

\begin{proof}
We simply note that the matrix $H_{\mathcal{J}}(\mathbf{m}) - \alpha_1 P$ is the Hankel matrix containing the moments of the measure resulting from removing the term corresponding to $\lambda_1$ from the measure $\mu$ supported on the spectrum of $\G$. The result follows directly from Theorem~\ref{thm:hamburger} and Sylvester's criterion.
\end{proof}

\begin{corollary} \label{trivial-bounds}
Let $\bm$ be the sequence of moments of an atomic measure $\mu(x) = \sum_{i=1}^n \alpha_i \delta(x- \lambda_i)$ supported on the spectrum of $\G$. Then  
\begin{align}
    \rho &\le \left(\dfrac{m_{2k}}{\alpha_1}\right)^{1/2k} .
    \end{align}
\end{corollary}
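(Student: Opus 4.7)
The plan is to derive this as a direct consequence of Lemma~\ref{lem:upper-bounds-phi} applied to the smallest possible index set $\mathcal{J}$, namely a singleton. Specifically, I will take $\mathcal{J} = \{k\}$ viewed as a one-element subset of $\mathbb{N}_0$. Then $H_{\mathcal{J}}(\bm)$ and $P_{\mathcal{J}}$ are both $1\times 1$ principal submatrices: from the Hankel structure (entry $(i,j)$ equals $m_{i+j}$ for $H_n(\bm)$ and $\rho^{i+j}$ for $P$), the single entry of $H_{\{k\}}(\bm)$ is $m_{2k}$, and the single entry of $P_{\{k\}}$ is $\rho^{2k}$.

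With these identifications, the matrix inequality \eqref{non-sdp} collapses to the scalar inequality
\begin{align*}
m_{2k} - \alpha_1 \rho^{2k} \ge 0,
\end{align*}
since positive semidefiniteness of a $1\times 1$ matrix is merely non-negativity of its single entry. Rearranging and noting that $\alpha_1 > 0$ for the three spectral measures of interest (where $\alpha_1$ equals $1$, $c_1^{(i)}$, or $c_1$, all non-negative and generically positive), taking the $(2k)$-th root yields the claimed bound $\rho \le (m_{2k}/\alpha_1)^{1/2k}$.

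There is essentially no obstacle here; the work has all been absorbed into Lemma~\ref{lem:upper-bounds-phi}. The only minor point to state explicitly is that one must allow singleton index sets in the application of that lemma (i.e., interpret $|\mathcal{J}| = 1$ as admissible), and that $\alpha_1 > 0$ is needed in order to divide. Alternatively, one could bypass the lemma and note directly that, since all atoms $\alpha_i \ge 0$ and $|\lambda_i| \le \rho$, the even moment satisfies $m_{2k} = \sum_{i=1}^n \alpha_i \lambda_i^{2k} \ge \alpha_1 \lambda_1^{2k} = \alpha_1 \rho^{2k}$; but the route through Lemma~\ref{lem:upper-bounds-phi} is more in keeping with the moment-theoretic framework of the paper.
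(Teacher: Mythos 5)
Your proposal is correct and follows essentially the same route as the paper: the paper's proof applies Lemma~\ref{lem:upper-bounds-phi} with the singleton $\mathcal{J}=\{k+1\}$ (one-based indexing, so the same $1\times 1$ submatrix with entries $m_{2k}$ and $\rho^{2k}$ that you obtain with your zero-based $\{k\}$), yielding $m_{2k}-\alpha_1\rho^{2k}\ge 0$ and hence the bound. Your explicit remark that $\alpha_1>0$ is needed to divide, and the alternative direct estimate $m_{2k}\ge\alpha_1\rho^{2k}$, are fine minor additions but do not change the argument.
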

\begin{proof}
  For $\mathcal{J} = \{k+1\}$, Lemma~\ref{lem:upper-bounds-phi} implies 
  \begin{align*}
     m_{2k}- \alpha_1 \rho^{2k} \ge 0.
  \end{align*}
This finishes the proof.
\end{proof}

 Applying this corollary to the measures $\mu_{\G}, \mu_{\G}^{(i)}$ and $\nu_{\G}$, we obtain three different hierarchies of bounds. For example, applying Corollary \ref{trivial-bounds} to the measure $\nu_{\G}$, which has moments $\bm = \{w_s\}_{s=1}^\infty$, we obtain the bound $\rho \le (w_{2k}/c_1)^{1/2k}$, where $c_1 = \left(\sum_{i=1}^n u_{i1}\right)^2$ is the fundamental weight. One can prove that this bound is tighter than the bound \eqref{nikiforov-bound} proved by Nikiforov \cite{nikiforov2006walks} (albeit only for even exponents). In particular, rearranging Wilf's inequality \eqref{wilf}, we obtain
\begin{align}
\left(1 - \frac{1}{\omega(\G)}\right)  &\le  \frac{\rho}{c_1}. \label{wilf-rearranged}
\end{align}
Moreover, the upper bound \eqref{nikiforov-bound} can be expressed as
\begin{align*}
    \left( \left(1 - \frac{1}{\omega(\mathcal{G})}\right) w_{2k}\right)^{\frac{1}{2k + 1}}.
\end{align*}
By substituting \eqref{wilf-rearranged} into this upper bound, we obtain
\begin{align*}
    \left( \left(1 - \frac{1}{\omega(\mathcal{G})}\right) w_{2k}\right)^{\frac{1}{2k + 1}} \ge \left( \rho \frac{w_{2k}}{c_1}\right)^{\frac{1}{2k + 1}} \ge \left( \left(\frac{w_{2k}}{c_1}\right)^{\frac{1}{2k}} \frac{w_{2k}}{c_1}\right)^{\frac{1}{2k + 1}} = \left(\frac{w_{2k}}{c_1}\right)^{\frac{1}{2k}},
\end{align*}
where the last quantity is the upper bound from Corollary \ref{trivial-bounds}, which is less or equal to the bound in \eqref{nikiforov-bound}.\\

Using Lemma~\ref{lem:upper-bounds-phi} with larger principal submatrices, we can improve these upper bounds further. The following upper bound is obtained by analyzing the case of $\mathcal{J} = \{1, k+1\}$.
\begin{corollary} \label{2x2-bounds}
Let $\bm$ be the sequence of moments of an atomic measure $\mu(x) = \sum_{i=1}^n \alpha_i \delta(x- \lambda_i)$ supported on the spectrum of $\G$. Then, for any $k \in \mathbb{N}$
 \begin{align}
    \rho \le \left(\dfrac{m_k + \sqrt{\left(\frac{m_0}{\alpha_1} - 1\right)\left(m_0m_{2k} - m_k^2\right) }}{ \vphantom{\sqrt{\frac{m_0}{\alpha_1}}} m_0}\right)^{1/k} . \label{largest-root}
 \end{align}
Furthermore, this bound is tighter than the one in Corollary \ref{trivial-bounds}.
\end{corollary}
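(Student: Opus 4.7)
The plan is to apply Lemma~\ref{lem:upper-bounds-phi} with the two-element index set $\mathcal{J} = \{1, k+1\}$, which yields the $2\times 2$ matrix inequality
\[
\begin{pmatrix} m_0 - \alpha_1 & m_k - \alpha_1 \rho^k \\ m_k - \alpha_1 \rho^k & m_{2k} - \alpha_1 \rho^{2k} \end{pmatrix} \succeq 0.
\]
A symmetric $2\times 2$ matrix is positive semidefinite iff both diagonal entries and the determinant are non-negative. The diagonal conditions reproduce $\alpha_1 \le m_0$ and the bound from Corollary~\ref{trivial-bounds}, so the new content is extracted from the determinant condition.

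Second, I would expand the determinant constraint $(m_0 - \alpha_1)(m_{2k} - \alpha_1 x^2) - (m_k - \alpha_1 x)^2 \geq 0$ with $x := \rho^k$. The $\alpha_1^2 x^2$ cross terms cancel, leaving a quadratic in $x$ with positive leading coefficient $\alpha_1 m_0$, so $x$ is forced to lie between its two real roots. Applying the quadratic formula and factoring the common $(m_0/\alpha_1 - 1)$ out of the two terms inside the discriminant collapses the upper root to
\[
x_+ = \frac{m_k + \sqrt{(m_0/\alpha_1 - 1)(m_0 m_{2k} - m_k^2)}}{m_0},
\]
which is exactly the right-hand side of \eqref{largest-root} raised to the $k$-th power.

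For the tightness claim against Corollary~\ref{trivial-bounds}, I would prove the scalar inequality
\[
\frac{m_k + \sqrt{(m_0/\alpha_1 - 1)(m_0 m_{2k} - m_k^2)}}{m_0} \;\leq\; \sqrt{m_{2k}/\alpha_1}.
\]
To square legitimately, I first check that $m_0\sqrt{m_{2k}/\alpha_1} - m_k \geq 0$: Cauchy--Schwarz applied to the atomic measure $\mu$ yields $m_k^2 \leq m_0 m_{2k}$, and combining with $\alpha_1 \leq m_0$ gives $m_k \leq \sqrt{m_0 m_{2k}} \leq m_0 \sqrt{m_{2k}/\alpha_1}$. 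Then squaring both sides and cancelling reduces the statement to $m_k^2/\alpha_1 + m_{2k} \geq 2 m_k \sqrt{m_{2k}/\alpha_1}$, which is immediate from AM--GM on the two non-negative quantities $m_k/\sqrt{\alpha_1}$ and $\sqrt{m_{2k}}$.

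The main obstacle is the bookkeeping in the discriminant simplification in step two: one has to spot that the cross-terms in the expanded quadratic regroup so as to extract a clean $(m_0/\alpha_1 - 1)$ factor, producing the symmetric form in \eqref{largest-root}. Apart from that, the argument is just $2\times 2$ linear algebra together with Cauchy--Schwarz and AM--GM.
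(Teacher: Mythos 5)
Your derivation of \eqref{largest-root} is essentially the paper's: apply Lemma~\ref{lem:upper-bounds-phi} with $\mathcal{J}=\{1,k+1\}$, impose the $2\times 2$ determinant condition, and take the largest root of the resulting quadratic in $\rho^k$; your computed upper root agrees with \eqref{largest-root}. (One phrasing slip: expanding $(m_0-\alpha_1)(m_{2k}-\alpha_1 x^2)-(m_k-\alpha_1 x)^2\ge 0$ gives a quadratic with \emph{negative} leading coefficient $-\alpha_1 m_0$, or equivalently a positive-leading quadratic constrained to be $\le 0$; either way the conclusion $\rho^k\le x_+$ stands.) Where you genuinely diverge is the tightness claim. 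The paper compares the instance of \eqref{largest-root} with parameter $2k$ (moments $m_0,m_{2k},m_{4k}$) against the bound $(m_{2k}/\alpha_1)^{1/2k}$ of Corollary~\ref{trivial-bounds}, invoking the lower bound of Corollary~\ref{basic-niki-bound} to get $m_0m_{4k}-m_{2k}^2\le(\tfrac{m_0}{\alpha_1}-1)m_{2k}^2$. You instead prove the direct scalar inequality
\[
\frac{m_k+\sqrt{\left(\tfrac{m_0}{\alpha_1}-1\right)\left(m_0m_{2k}-m_k^2\right)}}{m_0}\;\le\;\left(\frac{m_{2k}}{\alpha_1}\right)^{1/2},
\]
using only $m_k^2\le m_0 m_{2k}$, $\alpha_1\le m_0$, and $2m_k\sqrt{m_{2k}/\alpha_1}\le m_k^2/\alpha_1+m_{2k}$. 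This is self-contained (no appeal to Corollary~\ref{basic-niki-bound}, no moments beyond $m_{2k}$) and compares the bound at the same index $k$ as stated, which is arguably the more direct reading of ``tighter''; the paper's route instead shows that for each trivial bound some member of the new hierarchy improves it, at the cost of using $m_{4k}$. One small correction to your last step: for a general measure on the spectrum, $m_k$ can be negative when $k$ is odd, so justify $2m_k\sqrt{m_{2k}/\alpha_1}\le m_k^2/\alpha_1+m_{2k}$ by $\bigl(m_k/\sqrt{\alpha_1}-\sqrt{m_{2k}}\bigr)^2\ge 0$ rather than by AM--GM on ``non-negative'' quantities; your preliminary check that $m_0\sqrt{m_{2k}/\alpha_1}-m_k\ge 0$ already handles negative $m_k$ via $|m_k|\le\sqrt{m_0m_{2k}}$, so the squaring step is legitimate as written.
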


\begin{proof}
Applying Lemma~\ref{lem:upper-bounds-phi} with $\mathcal{J} = \{1, k+1\}$, we conclude that $\det (H_\mathcal{J}(\bm) - \alpha_1 P_\mathcal{J}) \ge 0$, which simplifies to the following expression:
 \begin{align*}
     -m_0\rho^{2k} + 2m_k \rho^k + \frac{1}{\alpha_1}\left((m_0 - \alpha_1 )m_{2k} - m_k^2\right) \ge 0.
 \end{align*}
 Making the substitution $y = \rho^k$, we obtain the following quadratic inequality
 \begin{align*}
     -m_0y^2 + 2m_k y + \frac{1}{\alpha_1}\left((m_0 - \alpha_1)m_{2k} - m_k^2\right) \ge 0.
 \end{align*}
 The quadratic on the left-hand side has a negative leading coefficient, which implies it is 
 negative whenever $y$ is larger than its largest root, which is given by the right hand side of
 \eqref{largest-root}. After substituting back $\rho^k$, the result follows. To see that this 
 bound improves the one in Corollary \ref{trivial-bounds}, note that Corollaries
 \ref{trivial-bounds} and \ref{basic-niki-bound} imply
 \begin{align*}
     m_{2k} \ge \alpha_1 \rho^{2k} \ge \alpha_1 \dfrac{m_{4k}}{m_{2k}} \hspace{8pt} \implies \hspace{8pt} m_{4k} \le \dfrac{1}{\alpha_1} m_{2k}^2 \hspace{8pt} \implies \hspace{8pt} m_0m_{4k} - m_{2k}^2 \le \left(\dfrac{m_0}{\alpha_1} - 1\right)m_{2k}^2.
 \end{align*}
Hence, we can use the inequality in Corollary \ref{largest-root} to obtain
\begin{align*}
    \rho^{2k} \le \dfrac{m_{2k} + \sqrt{\left(\dfrac{m_0}{\alpha_1} - 1\right)\left(m_0m_{4k} - m_{2k}^2\right) }}{m_0} \le \dfrac{m_{2k} + \sqrt{\left(\dfrac{m_0}{\alpha_1} - 1\right)\left(\dfrac{m_0}{\alpha} - 1\right)m_{2k}^2 }}{m_0} = \dfrac{m_{2k}}{\alpha_1}.
\end{align*}
\end{proof}

As with previous results, we can obtain concrete bounds from this result by substituting $\alpha_1$ and $m_k$ by either $(i)$ 1 and $\phi_k$, $(ii)$ $c_1^{(i)}$ and $\phi_k^{(i)}$, or $(iii)$ $c_1$ and $w_k$, respectively. For example, we can apply Corollary \ref{2x2-bounds} to the closed-walks measure for node $i$, $\mu_{\G}^{(i)}$, using $\mathcal{J} = \{1, 2\}$. Since $\phi_0(i) = 1$ and $\phi_1(i) = 0$, we obtain the upper bound in the following Corollary.
\begin{corollary} \label{not-better-phi-j}
    For a graph $\G$
    \begin{align}
        \rho^2 \le \left( \dfrac{1}{c_1^{i}} - 1\right) \phi_2(i) = \left(\dfrac{1}{x_i^2} - 1\right) d_i, \hspace{12pt} \text{for all } i \in \{1, \dots, n\}, \label{bound on xj}
    \end{align}
    where $d_i$ is the degree of vertex $i$ and $x_i$ is the $i$-th component of the leading eigenvector of $A$.
\end{corollary}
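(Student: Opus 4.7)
The plan is to apply Corollary \ref{2x2-bounds} directly to the closed-walks measure for vertex $i$, namely $\mu_{\G}^{(i)}$ from Definition \ref{def:spectral-measures}. For this measure the weight on $\lambda_1 = \rho$ is $\alpha_1 = c_1^{(i)} = u_{i1}^2$, which in the notation of the statement equals $x_i^2$, and the moment sequence is $\bm = \{\phi_k(i)\}_{k\ge 0}$ by Lemma \ref{lemma:moms-walks}.

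Next I would specialize Corollary \ref{2x2-bounds} to $k=1$, which corresponds to the choice $\mathcal{J} = \{1,2\}$ in Lemma \ref{lem:upper-bounds-phi}. The bound \eqref{largest-root} then reads
\begin{align*}
\rho \le \frac{m_1 + \sqrt{\bigl(\tfrac{m_0}{\alpha_1}-1\bigr)\bigl(m_0 m_2 - m_1^2\bigr)}}{m_0}.
\end{align*}

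The only remaining work is bookkeeping on the three lowest moments of $\mu_{\G}^{(i)}$. Since $\phi_0(i) = (A^0)_{ii} = 1$, $\phi_1(i) = (A)_{ii} = 0$ (there are no self-loops in a simple graph), and $\phi_2(i) = (A^2)_{ii}$ counts the walks of length two from $i$ back to itself, which equals the degree $d_i$, I substitute $m_0 = 1$, $m_1 = 0$, $m_2 = d_i$, and $\alpha_1 = x_i^2$ into the displayed inequality. The $m_1$ term vanishes and the expression collapses to $\rho \le \sqrt{(1/x_i^2 - 1)\,d_i}$, which yields the claim after squaring.

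There is essentially no obstacle here beyond verifying the moment identifications; the statement is a clean instantiation of the already-proved two-dimensional upper bound. The only thing worth pausing over is the equivalent form $c_1^{(i)} = x_i^2$, which follows from the definition $c_l^{(i)} = u_{il}^2$ in Lemma \ref{lema:moms} together with the fact that $\bu_1$ is the leading eigenvector (so $u_{i1} = x_i$ in the notation of the corollary).
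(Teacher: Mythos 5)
Your proposal is correct and follows exactly the paper's own route: the paper likewise obtains this bound by applying Corollary \ref{2x2-bounds} with $\mathcal{J}=\{1,2\}$ (i.e.\ $k=1$) to the closed-walks measure $\mu_{\G}^{(i)}$, using $\phi_0(i)=1$, $\phi_1(i)=0$, $\phi_2(i)=d_i$, and $\alpha_1=c_1^{(i)}=x_i^2$. Nothing is missing; the moment bookkeeping and the identification $c_1^{(i)}=u_{i1}^2=x_i^2$ are precisely the steps the paper relies on.
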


Notice that inequality \eqref{bound on xj} can be written as $$ x_i \le \dfrac{1}{\sqrt{1 + \frac{\rho^2}{d_i}}}\hspace{3pt},$$ which was first proven by Cioabă and Gregory in \cite{cioaba2007principal}. Our method provides an alternative proof. Furthermore, we can refine Corollary \ref{not-better-phi-j}, as follows. 
\begin{corollary} \label{maybe-better-bipartite}
    For a bipartite graph $\G$ and $k \in \mathbb{N}_0$, we have
    \begin{align}
        \rho^{2k} &\le \dfrac{\phi_{2k}}{2}, \label{half-bound}\\
        \rho^{2k} &\le \dfrac{\phi_{2k}(i)}{2c_1^{(i)}}. \label{half-bound-i}
    \end{align}
\end{corollary}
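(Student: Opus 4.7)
Two spectral symmetries of bipartite graphs drive the argument. First, the spectrum of $A$ is symmetric about the origin, so $\lambda_n = -\lambda_1 = -\rho$. Second, if one takes $\bu_n$ to be the eigenvector obtained from the leading eigenvector $\bu_1$ by flipping the signs of the entries on one side of the bipartition, then $\bu_n$ is an eigenvector for $-\rho$ and satisfies $u_{in}^2 = u_{i1}^2$ for every $i$; in the notation of Lemma~\ref{lema:moms}, this says $c_n^{(i)} = c_1^{(i)}$.

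The plan is then to imitate Corollary~\ref{trivial-bounds}, but to strip off \emph{both} extremal atoms from the spectral measures rather than just the one at $\rho$. Explicitly, set
\begin{align*}
\hat\mu_\G(x) := \sum_{l=2}^{n-1}\delta(x-\lambda_l), \qquad \hat\mu_\G^{(i)}(x) := \sum_{l=2}^{n-1} c_l^{(i)}\delta(x-\lambda_l).
\end{align*}
Each is a nonnegative atomic measure on $\mathbb{R}$, so by Theorem~\ref{thm:hamburger} every Hankel matrix formed from its moments is positive semidefinite. In particular, the one-element Hankel matrix $H_{\{k+1\}}$ (just a scalar diagonal entry) is nonnegative, which says that every even moment of the modified measure is nonnegative.

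A direct calculation using $\lambda_1 = \rho$, $\lambda_n = -\rho$, and $c_n^{(i)} = c_1^{(i)}$ gives
\begin{align*}
m_{2k}(\hat\mu_\G) &= \phi_{2k} - \rho^{2k} - (-\rho)^{2k} = \phi_{2k} - 2\rho^{2k},\\
m_{2k}(\hat\mu_\G^{(i)}) &= \phi_{2k}(i) - c_1^{(i)}\rho^{2k} - c_n^{(i)}(-\rho)^{2k} = \phi_{2k}(i) - 2c_1^{(i)}\rho^{2k}.
\end{align*}
Requiring each to be nonnegative yields \eqref{half-bound} and \eqref{half-bound-i}.

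There is no serious technical obstacle: once the two bipartite symmetries are identified, the proof reduces to the one-moment instance of the Hamburger argument that already underlies Corollary~\ref{trivial-bounds}. The only minor care concerns the case in which $\rho$ has multiplicity greater than one, where the orthonormal basis for the $(-\rho)$-eigenspace must be chosen so that the pairing $c_n^{(i)} = c_1^{(i)}$ is preserved. For connected bipartite graphs, $\rho$ is simple by Perron--Frobenius and this subtlety disappears.
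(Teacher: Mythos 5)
Your proof is correct, but it follows a genuinely different route from the paper. The paper ``folds'' the symmetric spectrum: it defines new measures supported on the \emph{squared} eigenvalues with doubled weights (e.g.\ $\mu_{\G}^{+}=\sum_{i\le \lceil n/2\rceil}2\,\delta(x-\lambda_i^2)$, and an analogous $\mu_{\G}^{(+,j)}$ for the vertex measure), and then re-runs the argument of Corollary~\ref{trivial-bounds} with top atom $\rho^2$ and weight $2$ (resp.\ $2c_1^{(i)}$); to justify the doubled weight in the per-vertex case it proves the full pairing $c_j^{(i)}=c_{n-j}^{(i)}$ for all $j$ via the eigenvector--eigenvalue identity applied to the vertex-deleted (still bipartite) subgraph. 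You instead stay with the original measures of Definition~\ref{def:spectral-measures} and strip \emph{both} extremal atoms at $\pm\rho$, needing only the top-pair identity $c_n^{(i)}=c_1^{(i)}$, which you get from the elementary sign-flip symmetry: with $D$ diagonal, $\pm1$ according to the bipartition, $DAD=-A$, so $D\bu_1$ is a $(-\rho)$-eigenvector with the same squared entries. Nonnegativity of the even moments of the stripped measures (Theorem~\ref{thm:hamburger} with a $1\times1$ Hankel block, or simply $x^{2k}\ge 0$) then gives \eqref{half-bound} and \eqref{half-bound-i} directly. Your argument is shorter and more elementary, bypassing the eigenvector--eigenvalue identity and the $\lceil n/2\rceil$ bookkeeping, and your remark on a degenerate $\rho$ is easily discharged by taking $\bu_n:=D\bu_1$ as part of the orthonormal basis of the $(-\rho)$-eigenspace (the statement's $c_1^{(i)}$ is unaffected). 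What the paper's construction buys in exchange is the stronger structural fact $c_j^{(i)}=c_{n-j}^{(i)}$ for every $j$ and folded measures supported in $[0,\rho^2]$, which are potentially useful for further Stieltjes-type refinements; both proofs implicitly assume $\G$ has at least one edge so that $\rho$ and $-\rho$ are distinct eigenvalues.
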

\begin{proof}
To prove our result, we define a new measure $\mu_{\G}^+ = \sum_{i=1}^{\lceil n/2 \rceil} 2\delta\left(x-\lambda_{i}^2\right)$. Notice that, since the eigenvalue spectrum of a bipartite graph is symmetric, we have that $m_k\left(\mu_{\G}^+\right) = \sum_{i=1}^{\lceil n/2 \rceil} 2\lambda_i^{2k} = m_k\left(\mu_{\mathcal{G}}\right) = \phi_{2k}$. The upper bound given by \eqref{half-bound} then follows by adapting the proof of Corollary \ref{trivial-bounds} to the measure $\mu_{\G}^+$.

A similar construction can be done for the closed-walks measure $\mu_{\G}^(i)$ from node $i$. It is easy to see that 
\begin{align}
       \phi_{2k}^{(i)} &=  \sum_{j=1}^{\lceil n/2 \rceil} \left(c_j^{(i)} + c_{n-j}^{(i)}\right)\lambda_i^{2k}. \label{half-bound-i-moments}
\end{align}

In what follows, we prove that $c_j^{(i)} = c_{n-j}^{(i)}$ using the \textit{eigenvector-eigenvalue identity} \cite{denton2019eigenvectors}. We first prove that $u_{i,j} = u_{n-i, j}$ for the case of odd $n$, and note that for even $n$ there is an analogous proof. Let $M_{\{j\}}$ be the principal minor of $A$ obtained by deleting row $j$ and column $j$, and let $\gamma_1 \ge \gamma_2 \ge \dots \ge \gamma_{n-1}$ be the eigenvalues of $M_{\{j\}}$. Because $M_{\{j\}}$ is the adjacency matrix of the graph obtained by deleting node $j$, which is also bipartite, its spectrum is also symmetric. From the \textit{eigenvector-eigenvalue identity} we have:
\begin{align}
    u_{i, j}^{2} \prod_{l=1 ; l \neq i}^{n}\left(\lambda_{i} -\lambda_{l} \right) = \prod_{l=1}^{n-1}\left(\lambda_{i} - \gamma_{l}\right). \label{eq:eigenvalue-eigenvector}
\end{align}
Using the symmetry of the spectrum of $M_{\{j\}}$, the term on the right hand side can be rewritten as
\begin{align}
\prod_{l=1}^{n-1}\left(\lambda_{i} - \gamma_{l}\right) = \prod_{l=1}^{(n-1)/2} \left(\lambda_{i}^2 - \gamma_{l}^2\right). \label{right-term-ev-ew}
\end{align}
Similarly, the symmetry of the spectrum of $A$ implies $\lambda_{n-i} = -\lambda_i$ and $\lambda_{\lceil n/2 \rceil} = 0$. Thus, the term accompanying $u_{i,j}^2$ in the left hand side can be rewritten as
\begin{align}
    \prod_{l=1 ; l \neq i}^{n}\left(\lambda_{i} -\lambda_{l} \right) = \left(2 \lambda_i\right) \lambda_i \prod_{\substack{l=1 \\ l \neq i, n-i}} \left(\lambda_i^2 - \lambda_l^2\right), \label{left-term-ev-ew}
\end{align}

\noindent where the first factor on the right corresponds to the term $\left( \lambda_i - \lambda_{n-i}\right)$, the second factor corresponds to $\left(\lambda_i - \lambda_{\lceil n/2 \rceil} \right)$, and the third factor corresponds to the pairs of remaining eigenvalues. After these substitutions, it becomes clear that solving for $u_{ij}^2$ in \eqref{eq:eigenvalue-eigenvector} yields the same result as solving for $u_{n-i,j}^2$. We conclude that $c_j^{(i)} = c_{n-j}^{(i)}$.

Hence, we can conclude from \eqref{half-bound-i-moments} that 
\begin{align*}
    \phi_{2k}^{(i)} = \sum_{l=1}^{n/2} 2c_j^{(i)} \lambda_i^{2k}.
\end{align*}
Thus, if we define the measure
 \begin{align*}
     \mu_{\G}^{(+,j)}(x)&= \sum_{i=1}^{\lceil n/2 \rceil} 2c_i^{(j)}\delta\left(x-\lambda_{i}^2\right),
 \end{align*}
\noindent then $m_k (\mu_{\G}^{(+,j)}) = \phi_{2k}$ and the upper bound \eqref{half-bound-i} follows from adapting the proof of Corollary \ref{trivial-bounds} to this measure.

\end{proof}

A more general version of Corollary \ref{2x2-bounds} is given in the following theorem.
\begin{theorem} \label{conditions for bounds}
Let $\bm$ be the sequence of moments of an atomic measure $\mu(x) = \sum_{i=1}^n \alpha_i \delta(x- \lambda_i)$ supported on the spectrum of $\G$. Define the infinite dimensional Hankel matrix $R$ given by
\begin{align*}
    R :=\left[\begin{array}{cccc}
    1 & r & r^2 & {\dots} \\ r & r^2 & r^3 & {\dots} \\
    {r^{2}} & {r^{3}} & {r^{4}} & {\dots} \\
    {\vdots} & {\vdots} & {\vdots} & {\ddots}  \end{array}\right] .
\end{align*}
Let $\mathcal{J} = \{j_1, j_2, \dots, j_s\}$ and $\mathcal{J}^{\prime} = \{j_1, j_2, \dots, j_{s-1}\}$ for $j_1, \dots, j_s \in \mathbb{N}_0$ such that $H_{\mathcal{J}^\prime}(\bm) \succ 0$. Then, the largest root $r^*$ of the polynomial:
\begin{align*}
   Q(r) \coloneqq  \det \left(H_{\mathcal{J}}(\bm) - \alpha_1 R_J\right),
\end{align*}
is an upper bound on the spectral radius.
\end{theorem}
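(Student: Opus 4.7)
The plan is to evaluate $Q$ at $r = \rho$ using Lemma~\ref{lem:upper-bounds-phi}, and separately control its sign as $r \to \infty$ by analyzing the leading coefficient as a polynomial in $r$; an intermediate-value argument then forces $\rho \le r^*$. Throughout I adopt the natural convention $j_1 < j_2 < \cdots < j_s$, so that the removed index $j_s$ is the largest element of $\mathcal{J}$ (this is the ordering consistent with the previous corollaries in the paper).

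The crucial observation is that $R_{\mathcal{J}}$ has rank one, since its $(a,b)$ entry factors as $r^{j_a}\cdot r^{j_b}$. Writing $\mathbf{v}(r) := (r^{j_1},\ldots,r^{j_s})^\intercal$, we have $R_{\mathcal{J}} = \mathbf{v}(r)\mathbf{v}(r)^\intercal$, so the matrix determinant lemma yields
\begin{align*}
Q(r) = \det H_{\mathcal{J}}(\bm) \;-\; \alpha_1\, \mathbf{v}(r)^\intercal \operatorname{adj}\bigl(H_{\mathcal{J}}(\bm)\bigr)\, \mathbf{v}(r).
\end{align*}
Expanding the quadratic form as $\sum_{a,b} \bigl[\operatorname{adj} H_{\mathcal{J}}(\bm)\bigr]_{ab}\, r^{j_a + j_b}$, the strictly largest exponent is $2j_s$, attained uniquely at $(a,b) = (s,s)$. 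The $(s,s)$ entry of $\operatorname{adj} H_{\mathcal{J}}(\bm)$ equals $\det H_{\mathcal{J}'}(\bm)$, which is strictly positive by the hypothesis $H_{\mathcal{J}'}(\bm) \succ 0$. Since $\alpha_1 > 0$, the leading coefficient of $Q$ is $-\alpha_1 \det H_{\mathcal{J}'}(\bm) < 0$, and therefore $Q(r) \to -\infty$ as $r \to \infty$.

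Now specialize Lemma~\ref{lem:upper-bounds-phi} at $r = \rho$: since $P_{\mathcal{J}} = R_{\mathcal{J}}\big|_{r=\rho}$, we have $H_{\mathcal{J}}(\bm) - \alpha_1 P_{\mathcal{J}} \succeq 0$, and consequently $Q(\rho) = \det\bigl(H_{\mathcal{J}}(\bm) - \alpha_1 P_{\mathcal{J}}\bigr) \ge 0$. Combining $Q(\rho) \ge 0$ with $\lim_{r \to \infty} Q(r) = -\infty$, the intermediate value theorem produces a real root of $Q$ in $[\rho, \infty)$; in particular, the largest real root $r^*$ satisfies $r^* \ge \rho$, which is the claimed upper bound.

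The main obstacle is pinning down the sign of the leading coefficient of $Q$: one must recognize the rank-one factorization of $R_{\mathcal{J}}$ to apply the matrix determinant lemma, identify the $(s,s)$ cofactor as the single monomial governing the top-degree term, and invoke $H_{\mathcal{J}'}(\bm) \succ 0$ to guarantee that this coefficient is strictly negative. Once this is settled, the remaining sign analysis of a real univariate polynomial reduces to a routine application of the intermediate value theorem.
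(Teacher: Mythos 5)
Your proposal is correct and follows essentially the same route as the paper: both exploit the rank-one structure of $R_{\mathcal{J}}$ via the matrix determinant lemma $\det(A+\mathbf{u}\mathbf{v}^\intercal)=\det(A)+\mathbf{v}^\intercal\operatorname{adj}(A)\mathbf{u}$, identify the leading coefficient of $Q$ as $-\alpha_1\det\bigl(H_{\mathcal{J}'}(\bm)\bigr)<0$ using $H_{\mathcal{J}'}(\bm)\succ 0$, and combine this with $Q(\rho)\ge 0$ from Lemma~\ref{lem:upper-bounds-phi} to conclude $\rho\le r^*$. Your write-up is in fact slightly more explicit than the paper's (ordering of the indices, uniqueness of the top-degree term, and the sign of $\alpha_1$), but the argument is the same.
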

\begin{proof}
We will prove that $Q(r)$ has a negative leading coefficient equal to $- \alpha_1 \det\left(H(\bm)_{\mathcal{J}'}\right) $. It is well known (see for example \cite{goberstein1988evaluating}) that if $A \in \mathbb{R}^{n\times n}$ and $B := \mathbf{uv}^{\intercal}$ is a rank-1 matrix in $\mathbb{R}^{n \times n}$ then 
 \begin{align} 
     \det(A + B) = \det(A) + \mathbf{v}^{\intercal} \operatorname{adj}(A) \mathbf{u},\label{nice-trick}
 \end{align}
where $\operatorname{adj}(A)$ is the cofactor matrix of $A$. Let $\mathbf{r} \coloneqq \left(r^{j_1 - 1}, \dots, r^{j_s - 1}\right)$. We note that 
\begin{align*}
-\alpha_1 R_\mathcal{J} = \left(\alpha_1\left(r^{j_1 - 1}, \dots, r^{j_s - 1}\right)\right)^{\intercal} \left(-\left(r^{j_1 - 1}, \dots, r^{j_s - 1}\right)\right) = (\alpha_1 \mathbf{r})(-\mathbf{r})^{\intercal}.
\end{align*}
Using \eqref{nice-trick}, we obtain
\begin{align} \label{incredible determinants}
    \det(H(\bm)_\mathcal{J} - \alpha_1 R_\mathcal{J}) = \det(H(\bm)_\mathcal{J}) - \alpha_1 \mathbf{r}^{\intercal} \operatorname{adj}(H(\bm)_\mathcal{J}) \mathbf{r}.
\end{align}
It follows that the leading term of $Q(r)$ is $-\alpha C_{s,s}r^{2(j_{s}-1)} = -\alpha \det(H_{\mathcal{J}'}) < 0$. By Lemma~\ref{lem:upper-bounds-phi} we have $Q(\rho) \ge 0$ and,  therefore, $\rho \le r^*$.
\end{proof}

Lemma~\ref{lem:upper-bounds-phi} was proved applying Hamburger's Theorem~to the moment sequence $\{m_s - \alpha_1 r^s\}_{s=0}^{\infty}$. We can also apply Stieltjes' Theorem~to the same moment sequence to obtain a different hierarchy of upper bounds. This is stated in the following theorem.
\begin{theorem} \label{thm:upper-bounds-stieljtes}
Let $\bm$ be the sequence of moments of an atomic measure $\mu(x) = \sum_{i=1}^n \alpha_i \delta(x- \lambda_i)$ supported on the spectrum of $\G$. Then, for any $\mathcal{J} \in \mathbb{N}_0$,
\begin{align}
\rho \left(H(\bm)_{\mathcal{J}} - \alpha P_{\mathcal{J}}\right) + \left(S(\bm)_{\mathcal{J}}- \alpha_1 \rho P_{\mathcal{J}}\right) \succeq 0, \label{non-sdp-stieltjes}
\end{align}
\noindent where $H(\bm)$ and $S(\bm)$ are the Hankel matrices of moments defined in Theorems \ref{thm:hamburger} and \ref{thm:stieltjes}, respectively. 
\end{theorem}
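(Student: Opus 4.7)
The plan is to apply Stieltjes' Theorem to the atomic measure obtained from $\mu$ by deleting the Dirac mass at $\lambda_1 = \rho$, and then restrict the resulting matrix inequality to the principal submatrix indexed by $\mathcal{J}$. This is the Stieltjes analogue of the argument used to prove Lemma~\ref{lem:upper-bounds-phi}.

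First I would introduce the measure $\tilde{\mu}(x) \coloneqq \sum_{i=2}^n \alpha_i \delta(x - \lambda_i)$, whose support is contained in $\{\lambda_2, \dots, \lambda_n\} \subseteq [-\rho, \rho] \subseteq [-\rho, \infty)$, and whose moment sequence $\tilde{\bm}$ satisfies $\tilde{m}_k = m_k - \alpha_1 \rho^k$. The key computational step is to record the two identities
\begin{align*}
H_n(\tilde{\bm}) = H_n(\bm) - \alpha_1 P_n, \qquad S_n(\tilde{\bm}) = S_n(\bm) - \alpha_1 \rho \, P_n,
\end{align*}
where $P_n$ denotes the $(n+1)\times(n+1)$ truncation of the infinite Hankel matrix $P$ from Lemma~\ref{lem:upper-bounds-phi}. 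Both identities follow from an entry-by-entry comparison; the factor $\rho$ in the second one appears because the $(i,j)$ entry of $S_n(\tilde{\bm})$ is $m_{i+j+1} - \alpha_1 \rho^{i+j+1} = m_{i+j+1} - \alpha_1 \rho \cdot \rho^{i+j}$.

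Next, because $\tilde{\mu}$ is supported in $[-\rho, \infty)$, the second part of Theorem~\ref{thm:stieltjes}, applied to the sequence $\tilde{\bm}$, yields $\rho H_n(\tilde{\bm}) + S_n(\tilde{\bm}) \succeq 0$ for every $n \in \mathbb{N}_0$. Substituting the two identities above gives
\begin{align*}
\rho\bigl(H_n(\bm) - \alpha_1 P_n\bigr) + \bigl(S_n(\bm) - \alpha_1 \rho \, P_n\bigr) \succeq 0.
\end{align*}

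Finally, for any finite index set $\mathcal{J} \subset \mathbb{N}_0$, I would pick $n \ge \max \mathcal{J}$ and extract the principal submatrix indexed by $\mathcal{J}$; by Sylvester's criterion, every principal submatrix of a PSD matrix is PSD, which yields the claimed inequality \eqref{non-sdp-stieltjes}. There is no real obstacle here beyond verifying the two Hankel identities; once these are in hand, the theorem is a direct transcription of Stieltjes' Theorem applied to the bulk measure $\tilde{\mu}$, exactly as Lemma~\ref{lem:upper-bounds-phi} was a transcription of Hamburger's Theorem.
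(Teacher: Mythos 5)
Your proposal is correct and follows the same route as the paper: delete the atom at $\lambda_1=\rho$, observe that the resulting measure has moment sequence $\{m_s-\alpha_1\rho^s\}$ supported in $[-\rho,\rho]\subseteq[-\rho,\infty)$, apply the second part of Stieltjes' theorem, and pass to the principal submatrix indexed by $\mathcal{J}$. Your write-up is in fact more explicit than the paper's (which leaves the Hankel identities $H_n(\tilde{\bm})=H_n(\bm)-\alpha_1 P_n$ and $S_n(\tilde{\bm})=S_n(\bm)-\alpha_1\rho P_n$ implicit), but the argument is the same.
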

\begin{proof}
Recall from Lemma~\ref{lem:upper-bounds-phi} that the sequence $\{m_s - \alpha_1 \rho^s \}_{s=0}^\infty$ corresponds to the moment sequence of a measure whose support is contained in $[-\rho, \rho]$ and, therefore, the result follows from Theorem~\ref{thm:stieltjes} . 
\end{proof}

Theorem~\ref{thm:upper-bounds-stieljtes} can be used to obtain bounds that improve on those of Corollary \ref{trivial-bounds}, as shown below.
\begin{corollary} \label{polynomial}
Let $\bm$ be the sequence of moments of an atomic measure $\mu(x) = \sum_{i=1}^n \alpha_i \delta(x- \lambda_i)$ supported on the spectrum of $\G$. Then, the largest root $r^*$ of the following polynomial:
    \begin{align*}
    Q(r) \coloneqq m_{2k} r + m_{2k+1} - 2\alpha_1 r^{2k+1},
    \end{align*}
\noindent is an upper bound on the spectral radius. Furthermore, this bound is tighter than the bound in Corollary \ref{trivial-bounds}.
\end{corollary}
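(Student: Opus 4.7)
The plan is to apply Theorem~\ref{thm:upper-bounds-stieljtes} with the singleton index set $\mathcal{J} = \{k+1\}$. At this choice, $H(\bm)_{\mathcal{J}}$, $S(\bm)_{\mathcal{J}}$, and $P_{\mathcal{J}}$ all collapse to the scalars $m_{2k}$, $m_{2k+1}$, and $\rho^{2k}$, respectively. The positive semidefiniteness condition \eqref{non-sdp-stieltjes} thus becomes the scalar inequality
\begin{equation*}
    \rho\bigl(m_{2k} - \alpha_1 \rho^{2k}\bigr) + \bigl(m_{2k+1} - \alpha_1 \rho^{2k+1}\bigr) \ge 0,
\end{equation*}
which rearranges exactly to $Q(\rho) \ge 0$.

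From here I would observe that $Q$ has degree $2k+1$ in $r$ with negative leading coefficient $-2\alpha_1 < 0$, so $Q(r) \to -\infty$ as $r \to \infty$, and therefore $Q(r) < 0$ for every real $r$ strictly greater than the largest real root $r^*$. Combined with $Q(\rho) \ge 0$, this forces $\rho \le r^*$, establishing the first claim.

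For the tightness comparison with Corollary~\ref{trivial-bounds}, let $r_0 \coloneqq (m_{2k}/\alpha_1)^{1/(2k)}$. A direct substitution using $\alpha_1 r_0^{2k} = m_{2k}$ gives the clean cancellation
\begin{equation*}
    Q(r_0) = m_{2k} r_0 + m_{2k+1} - 2 m_{2k} r_0 = m_{2k+1} - m_{2k}\, r_0.
\end{equation*}
Applying Corollary~\ref{basic-niki-bound} with moment indices $2s = 2k$ and step $1$ gives $m_{2k+1} \le \rho\, m_{2k}$, and Corollary~\ref{trivial-bounds} itself yields $\rho \le r_0$; chaining these bounds delivers $Q(r_0) \le 0$.

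The main subtlety is that $Q(r_0) \le 0$ does not immediately imply $r_0 \ge r^*$, since \emph{a priori} $r_0$ could lie between smaller roots of $Q$. I would close this gap by a short monotonicity argument: $Q'(r) = m_{2k} - 2(2k+1)\alpha_1 r^{2k}$ vanishes at the unique positive value $r_c = (m_{2k}/(2(2k+1)\alpha_1))^{1/(2k)}$, which satisfies $r_c < r_0$, and $Q$ is strictly decreasing on $[r_c, \infty)$. Hence $Q(r_0) \le 0$ propagates to $Q(r) < 0$ for all $r > r_0$, forcing $r^* \le r_0$ and completing the improvement claim $\rho \le r^* \le (m_{2k}/\alpha_1)^{1/(2k)}$.
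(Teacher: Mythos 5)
Your proof is correct and follows essentially the same route as the paper: Theorem~\ref{thm:upper-bounds-stieljtes} with $\mathcal{J}=\{k+1\}$ yields $Q(\rho)\ge 0$, the negative leading coefficient gives $\rho \le r^*$, and the tightness claim is settled by evaluating $Q$ at $(m_{2k}/\alpha_1)^{1/2k}$ via Corollaries~\ref{basic-niki-bound} and \ref{trivial-bounds}, combined with the monotonicity of $Q$ past its critical point $r_c$. The only difference is the order of these last two steps (you substitute first and close the gap with $Q'$ afterwards, whereas the paper establishes the monotonicity region first), which is immaterial.
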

\begin{proof}
 Applying Theorem~\ref{thm:upper-bounds-stieljtes} with $\mathcal{J} = \{k+1\}$, we obtain
\begin{align*}
    \rho \left[m_{2k}\right] +  \left[m_{2k+1}\right] -2 \alpha_1 \rho \left[\rho^{2k}\right] \succeq 0 \implies  \rho m_{2k} +  m_{2k+1} -2 \alpha_1 \rho^{2k+1} \ge 0,
\end{align*}
\noindent and, thus, $Q(\rho) \ge 0$. Since the leading coefficient of $Q(r)$ is negative, it follows that $\rho \le r^*$. To prove that $r^* \le (m_{2k}/\alpha_1)^{1/2k}$, we first prove that $r^*$ is the unique root of $Q(r)$ in the region defined by 
\begin{align}
    r \ge \left(\dfrac{m_{2k}}{2\alpha_1 (2k+1)}\right)^{1/2k}. \label{interval}
\end{align}
This is indeed the case, because the derivative of $Q(r)$, given by $Q^\prime(r) = m_{2k} - 2\hspace{1pt}(2k+1)\hspace{1pt}\alpha_1\hspace{1pt}r^{\scriptscriptstyle2k + 1}$, is negative in the interval defined in \eqref{interval}. Also, note that
\begin{align*}
    \left(\dfrac{m_{2k}}{\alpha_1} \right)^{1/2k} \ge \left(\dfrac{m_{2k}}{2\alpha_1 (2k+1)}\right)^{1/2k}.
\end{align*}
Therefore, it suffices to show that 
\begin{align*}
    Q\left(\left(\dfrac{ m_{2k} }{\alpha_1} \right)^{1/2k}\right) \le 0.
\end{align*}
To this end, we evaluate and obtain

\begin{align*}
    &Q\left(\left(\dfrac{1}{\alpha_1} m_{2k}\right)^{1/2k}\right) \le 0\\
    &\iff \left(\dfrac{1}{\alpha_1} m_{2k}\right)^{1/2k} m_{2k} +  m_{2k+1} -2 \alpha_1 \left(\left(\dfrac{1}{\alpha_1} m_{2k}\right)^{1/2k}\right)^{2k+1}\le 0\\
    &\iff \left(\dfrac{1}{\alpha_1} m_{2k}\right)^{1/2k} m_{2k} +  m_{2k+1} - 2 m_{2k} \left(\dfrac{1}{\alpha_1} m_{2k}\right)^{1/2k} \le 0\\
    &\iff \dfrac{m_{2k+1}}{m_{2k}} \le \left(\dfrac{1}{\alpha_1} m_{2k}\right)^{1/2k},
\end{align*}
where the last inequality is true since the left-hand side is a lower bound of $\rho$ by Corollary \ref{basic-niki-bound}, and the right hand side is an upper bound of $\rho$ by Corollary \ref{trivial-bounds}. This finishes the proof.  
\end{proof}

The implicit bound in Corollary \ref{polynomial}, when applied to the moment sequence $\bm = \{w_s\}_{s=1}^\infty$, provides an improvement on the bound given in Corollary \ref{basic-niki-bound} and, consequently, on the bound \eqref{niki-bound}. Using inequality \eqref{wilf}, we can also obtain a bound in terms of the clique number instead of the fundamental weight, which also improves on \eqref{niki-bound}, as we show below.
\begin{corollary} \label{polynomial-clique}
The largest root $r^*$ of the following polynomial:
\begin{align*}
    Q(r) \coloneqq m_{2k}r  + m_{2k+1} - 2\dfrac{\omega(\G)}{\omega(\G) - 1} r^{2k+2},
\end{align*}
\noindent is an upper bound on the spectral radius. Furthermore, this bound is an improvement on \eqref{nikiforov-bound}.
\end{corollary}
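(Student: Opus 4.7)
The plan is to derive the corollary from Corollary \ref{polynomial} applied to the walks measure $\nu_{\G}$ (so that $\alpha_1 = c_1$ and $m_s = w_s$), and then use Wilf's inequality \eqref{wilf} to replace $c_1$ by a quantity involving the clique number. First, Corollary \ref{polynomial} gives that the auxiliary polynomial
\begin{align*}
    Q_0(r) := m_{2k}r + m_{2k+1} - 2 c_1 r^{2k+1}
\end{align*}
satisfies $Q_0(\rho) \ge 0$. Rearranging \eqref{wilf} yields $c_1 \ge \omega(\G)\rho/(\omega(\G)-1)$, hence $2 c_1 \rho^{2k+1} \ge 2\omega(\G)\rho^{2k+2}/(\omega(\G)-1)$, which gives
\begin{align*}
    Q(\rho) = m_{2k}\rho + m_{2k+1} - \frac{2\omega(\G)}{\omega(\G)-1}\rho^{2k+2} \ge Q_0(\rho) \ge 0.
\end{align*}
Since $Q$ has negative leading coefficient, $Q(r) \to -\infty$ as $r \to \infty$, so $Q(\rho) \ge 0$ forces $\rho$ to lie at or below the largest real root $r^*$ of $Q$.

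For the improvement on \eqref{nikiforov-bound}, I would compare $r^*$ with $U := \left(\frac{\omega(\G)-1}{\omega(\G)} w_{2k}\right)^{1/(2k+1)}$, the Nikiforov upper bound obtained by setting the index $k \mapsto 2k$ in \eqref{nikiforov-bound}. Evaluating $Q$ at $U$ and using $U^{2k+1} = (\omega(\G)-1)w_{2k}/\omega(\G)$, the dominant term collapses:
\begin{align*}
    Q(U) = w_{2k} U + w_{2k+1} - \frac{2\omega(\G)}{\omega(\G)-1}\, U \cdot U^{2k+1} = w_{2k+1} - w_{2k} U.
\end{align*}
So $Q(U) \le 0$ iff $w_{2k+1}/w_{2k} \le U$, which follows by sandwiching $\rho$: the lower bound \eqref{niki-bound} gives $w_{2k+1}/w_{2k} \le \rho$ and \eqref{nikiforov-bound} gives $\rho \le U$. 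As in the proof of Corollary \ref{polynomial}, I would then verify that $U$ lies past the critical point $r_c = \left(\frac{w_{2k}(\omega(\G)-1)}{4(k+1)\omega(\G)}\right)^{1/(2k+1)}$ where $Q'$ vanishes---indeed $U/r_c = (4(k+1))^{1/(2k+1)} \ge 1$---so $Q$ is strictly decreasing on $[r_c, \infty)$, and $Q(U) \le 0$ together with $Q(\rho) \ge 0$ implies $\rho \le r^* \le U$.

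The main obstacle is pinning down the right comparison value $U$ and recognizing the specific cancellation in $Q(U)$: the coefficient $\omega(\G)/(\omega(\G)-1)$ in $Q$ is engineered precisely so that $\frac{2\omega(\G)}{\omega(\G)-1}\, U \cdot U^{2k+1}$ cancels exactly with $2 w_{2k} U$, leaving only the telescoped expression $w_{2k+1} - w_{2k} U$. The sandwiching trick---using one lower and one upper bound on $\rho$ of opposite type around $w_{2k+1}/w_{2k}$ and $U$---then closes the improvement claim, while the monotonicity and critical-point check are routine computations analogous to those in the proof of Corollary \ref{polynomial}.
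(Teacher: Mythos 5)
Your proof is correct and takes essentially the same route as the paper: apply the argument of Corollary \ref{polynomial} with Wilf's inequality \eqref{wilf} used to replace $2c_1\rho^{2k+1}$ by $2\tfrac{\omega(\G)}{\omega(\G)-1}\rho^{2k+2}$, giving $Q(\rho)\ge Q_0(\rho)\ge 0$, and then run the same largest-root/monotonicity analysis; the comparison with \eqref{nikiforov-bound} via evaluating $Q$ at $U=\bigl(\tfrac{\omega(\G)-1}{\omega(\G)}w_{2k}\bigr)^{1/(2k+1)}$ and sandwiching $w_{2k+1}/w_{2k}\le\rho\le U$ is exactly the detail the paper omits (``very similar\dots we omit the details''). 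Note also that your displayed inequality is consistent with the stated polynomial (factor $2$ and exponent $2k+2$), whereas the paper's own display drops the factor of $2$ and writes $\rho^{2k+1}$, an apparent typo.
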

\begin{proof}
 The proof is very similar to that of Corollary \ref{polynomial}. Using \eqref{wilf}, we have that
 \begin{align*}
         \rho w_{2k} + w_{2k+1} - \dfrac{\omega(\G)}{\omega(\G) - 1}\rho^{2k+1} \ge \rho w_{2k} + w_{2k+1} - 2\alpha_1 \rho^{2k+1} \ge 0.
 \end{align*}
We omit the details to avoid repetitions. 
\end{proof}

\begin{table}[h!]
    \centering
    \begin{adjustbox}{center}
        \def\arraystretch{3.0}
    \begin{tabular}{|c|c|c|c|}
                \hline
        General bound & Special cases & \begin{minipage}{0.78in} \vspace{-3mm} \centering $\vphantom{\sum^k}$ Moment sequence $\vphantom{\sum^k}$ \end{minipage}& Reference\\ \hline
          & 
            $\rho \le \left(\phi_{2k}\right)^{1/2k}$
          & \begin{minipage}{0.78in}\centering closed walks \end{minipage} & \\[1ex] \cline{2-3}
            &$\rho \le \left(\dfrac{w_{2k}}{c_1}\right)^{1/2k}$ & all & \begin{minipage}{0.65in}\centering Corollary \ref{trivial-bounds} \end{minipage} \\[1ex] \cline{2-3}
         $\rho \le \left(\dfrac{m_{2k}}{\alpha_1}\right)^{1/2k}$&  $\qquad \rho \le \left(\left(1 - \dfrac{1}{\omega(\G)}\right)w_{2k}\right)^{1/2k+1}$\hspace{3pt} \cite{nikiforov2006walks} & \begin{minipage}{0.78in} \vspace{1mm} \centering $\vphantom{\sum^k}$ walks \vspace{2mm} \end{minipage}& \\[1ex] \cline{2-4}
         & $\rho \le \left(\dfrac{\phi_{2k}}{2}\right)^{1/2k}$& \begin{minipage}{0.78in} \vspace{1mm} \centering $\vphantom{\sum^k}$ closed walks (bipartite $\G$)\vspace{2mm} \end{minipage} & \multirow{2}{*}{\begin{minipage}{0.65in}\centering  Corollary \ref{maybe-better-bipartite} \vspace{-14pt} \end{minipage}}\\
         \cline{2-3}
         & $\rho \le \left(\dfrac{\phi_{2k}^{(i)}}{2}\right)^{1/2k}$& \begin{minipage}{0.78in} \vspace{1mm} \centering $\vphantom{\sum^k}$ closed walks from node $i$ (bipartite $\G$) \vspace{-1mm} \end{minipage}
         &\\[1ex] \hline 
         $\rho \le \left(\dfrac{m_k + \sqrt{\left(\dfrac{m_0}{\alpha_1} - 1\right)\left(m_0m_{2k} - m_k^2\right) }}{m_0}\right)^{1/k} $ & \hspace{42pt} $\rho \le \sqrt{\left(\dfrac{1}{x_i^2}-1\right)d_i}$\hspace{3pt} \cite{cioaba2007principal} & \begin{minipage}{0.78in} \vspace{6mm} \centering $\vphantom{\sum^k}$ closed walks from node $i$ \vspace{6mm} \end{minipage} & \begin{minipage}{0.65in}\centering Corollary \ref{not-better-phi-j} \end{minipage} \\[2.5ex] \hline
         \multirow{2}{*}{         $\begin{array}{ccl}
                \multirow{2}{*}{$\rho \le \vspace{10pt}$ } & {  \max\limits_{r}} & {r} \\[-5ex]
                & {\text{s.t}} & { m_{2k}r + m_{2k+1} - 2\alpha_1 r^{2k+1} = 0}
            \end{array}$ \vspace{-6.5mm}} & $\begin{array}{ccl}
                \multirow{2}{*}{$\rho \le \vspace{10pt}$ } & {  \max\limits_{r}} & {r} \\[-5ex]
                & {\text{s.t}} & { w_{2k}r + w_{2k+1} - 2c_1 r^{2k+1} = 0}
            \end{array}$ & \begin{minipage}{0.78in} \vspace{1mm} \centering $\vphantom{\sum^k}$ walks \vspace{2mm} \end{minipage} & \begin{minipage}{0.65in}\centering  Corollary \ref{polynomial} \end{minipage}\\[1ex] \cline{2-4}
 &          $\begin{array}{ccl}
                \multirow{2}{*}{$\rho \le \vspace{10pt}$ } & {  \max\limits_{r}} & {r} \\[-5ex]
                & {\text{s.t}} & { w_{2k}r + w_{2k+1} - 2\dfrac{\omega(\G)}{\omega(\G) - 1} r^{2k+2} = 0}
            \end{array}$ & \begin{minipage}{0.78in} \vspace{1mm} \centering $\vphantom{\sum^k}$ walks \vspace{1mm} \end{minipage} & \begin{minipage}{0.65in}\centering  Corollary \ref{polynomial-clique} \end{minipage}\\[3ex]\hline
    \end{tabular}
    \end{adjustbox}
    \caption{Upper bounds on the spectral radius $\rho$, where $ m_k $ is the $k$-th moment of an atomic measure $\mu(x) = \sum_{i=1}^n \alpha_i \delta(x- \lambda_i)$ supported on the spectrum of $\G$. The number of $k$-walks,  closed $k$-walks and closed $k$-walks from node $i$ are denoted by $w_k$, $\phi_k$ and $\phi_k^{(i)}$, respectively. We write $\omega(\G)$, $c_1$ and $x_i$ for the clique number of $\G$, the fundamental weight of $A$, and the $i$-th component of the leading eigenvector of $A$, respectively.}
    \label{tab:upper-bounds}
\end{table}

\FloatBarrier
\bibliographystyle{plain}
\bibliography{biblio}
\end{document}